\theoremstyle{plain}
\newtheorem{theorem}{Theorem}[section]
\newtheorem{proposition}[theorem]{Proposition}
\newtheorem{lemma}[theorem]{Lemma}
\newtheorem{corollary}[theorem]{Corollary}
\theoremstyle{definition}
\newtheorem{definition}[theorem]{Definition}
\newtheorem{notation}[theorem]{Notation}
\newtheorem{example}[theorem]{Example}
\begin{document}
	\def\N{\mathbb{N}}
	\def\Z{\mathbb{Z}}
	\def\Q{\mathbb{Q}}
	\def\R{\mathbb{R}}
	\def\P{\mathbb{P}}
	\def\E{E^{[2]}}
	\def\0{[0,\infty)}
	
	\title[Square closed pointed vector lattices]{
		Square closed pointed vector lattices}
	\author{C. Schwanke}
	\address{Department of Mathematics and Applied Mathematics, University of Pretoria, Private Bag X20, Hatfield 0028, South Africa}
	\email{cmschwanke26@gmail.com}
	\date{\today}
	\subjclass[2020]{46A40}
	\keywords{vector lattice, $\Phi$-algebra, square closed}
	
	\begin{abstract}
		Given an Archimedean vector lattice $E$, we present one elementary property of $E$ which is equivalent to the entire traditional list of axioms which makes $E$ a $\Phi$-algebra. We call a vector lattice with this property ``square closed". More generally, we then introduce the notion of a pseudo square closed vector lattice and prove that an Archimedean vector lattice is a semiprime $f$-algebra if and only if it is pseudo square closed. This theory serves as an efficient tool for determining whether or not an Archimedean vector lattice is a $\Phi$-algebra (or a semiprime $f$-algebra). To illustrate this point, we generalize a well-known result for uniformly complete Archimedean vector lattices with a strong order unit by proving that every functionally complete Archimedean vector lattice with a strong order unit is a $\Phi$-algebra. 
	\end{abstract}
	
	\maketitle
	\section{Introduction}\label{S:intro}
	
	A vector lattice $E$ is called a $\Phi$-algebra, or a unital $f$-algebra, with multiplicative unit $e\in E$, if $E$ is equipped with a map $E\times E\ni(f,g)\mapsto fg\in E$ that satisfies the following properties.
	\begin{itemize}
		\item[(i)] $f(g+h)=fg+fh\quad (f,g,h\in E)$.
		\item[(ii)] $(f+g)h=fh+gh\quad (f,g,h\in E)$.
		\item[(iii)] $(fg)h=f(gh)\quad (f,g,h\in E)$.
		\item[(iv)] $(\alpha f)(\beta g)=(\alpha\beta)fg\quad (f,g\in E, \alpha,\beta\in\R)$.
		\item[(v)] If $f,g\in E^+$, then $fg\in E^+$.
		\item[(vi)] If $f,g\in E$ satisfy $f\wedge g=0$, and $c\in E^+$, then $(cf)\wedge g=0$.
		\item[(vii)] If $f,g\in E$ are such that $f\wedge g=0$, and $c\in E^+$, then $(fc)\wedge g=0$.
		\item[(viii)] $fe=f\quad (f\in E)$.
		\item[(ix)] $ef=f\quad (f\in E)$.
	\end{itemize}
	
	For Archimedean $\Phi$-algebras, we replace this list of axioms with just one simple property in this paper, while simultaneously providing a purely order-theoretic description of Archimedean $\Phi$-algebras and their multiplication.
	
	The central idea for accomplishing this task is forming a completely order-theoretical depiction of the square function. In this prospect, one can observe that for any $x\in\R$, we have
	\begin{equation}\label{eq: x^2}
		x^2=\underset{\lambda\in\R}{\sup}\{2\lambda x-\lambda^2\}.
	\end{equation}
	
	The right hand side of formula \eqref{eq: x^2} also makes sense when $x\in\R$ is replaced with an element $f$ in vector lattice, and $\lambda^2$ is supplanted by $\lambda^2e$, for some designated element $e$ of the vector lattice. The caveat here is this vector lattice would need to be closed under this infinite supremum. We call such vector lattices \textit{square closed}, and we present Theorem~\ref{T: main}, which states that an Archimedean vector lattice is a $\Phi$-algebra if and only if it is square closed. Therefore, we illustrate that, for Archimedean vector lattices, the sole notion of square closedness is equivalent to the aforementioned nine properties of a $\Phi$-algebra.
	
	More generally, we also introduce the notion of a \textit{pseudo square closed} vector lattice. As a consequence of Theorem~\ref{T: main}, we prove that an Archimedean vector lattice is a semiprime $f$-algebra if and only if it is pseudo square closed in Theorem~\ref{T: semiprime f-alg}.
	
	A major benefit we gain from the theory presented in this paper is that one can determine whether or not an Archimedean vector lattice is a $\Phi$-algebra (semiprime $f$-algebra), simply by checking whether or not it is square closed (pseudo square closed). In this light, we show in Proposition~\ref{P: functionally complete} that a functionally complete Archimedean vector lattice with a strong order unit is a $\Phi$-algebra. Therefore, we generalize the well-known result that every uniformly complete Archimedean vector lattice with a strong order unit is a $\Phi$-algebra.
	
	We proceed with some preliminaries.
	
	\section{Preliminaries}\label{S: prelims}
	
	We refer the reader to the standard texts (e.g. \cite{AB, LuxZan1, LilZan, Zan2})  for any unexplained terminology or basic theory regarding vector lattices.
	Throughout this paper, $\mathbb{N}$ stands for the set of strictly positive integers, and the ordered field of real numbers is denoted by $\mathbb{R}$. All vector lattices in this document are real vector lattices.
	
	A vector space $E$ is called an \textit{associative algebra} if there exists a map
	\[
	E\times E\ni(f,g)\mapsto fg\in E,
	\]
	called a \textit{multiplication} on $E$, for which the following hold.
	\begin{itemize}
		\item[(i)] $f(g+h)=fg+fh\quad (f,g,h\in E)$.
		\item[(ii)] $(f+g)h=fh+gh\quad (f,g,h\in E)$.
		\item[(iii)] $(fg)h=f(gh)\quad (f,g,h\in E)$.
		\item[(iv)] $(\alpha f)(\beta g)=(\alpha\beta)fg\quad (f,g\in E, \alpha,\beta\in\R)$.
	\end{itemize}
	
	When an associative algebra $E$ is a vector lattice for which
	\begin{itemize}
		\item[(v)] 	$fg\in E^+$ for every $f,g\in E^+$,
	\end{itemize}
	we call $E$ an $\ell$-\textit{algebra}. An $f$-\textit{algebra} $E$ is an $\ell$-algebra which satisfies the following additional properties.
	\begin{itemize}
		\item[(vi)] If $f,g\in E$ satisfy $f\wedge g=0$, and $c\in E^+$, then $(cf)\wedge g=0$.
		\item[(vii)] If $f,g\in E$ are such that $f\wedge g=0$, and $c\in E^+$, then $(fc)\wedge g=0$.
	\end{itemize}
	If an $f$-algebra $E$ contains a multiplicative unit, that is, an element $e\in E$ such that
	\begin{itemize}
		\item[(viii)] $fe=f\quad (f\in E)$, and
		\item[(ix)] $ef=f\quad (f\in E)$,
	\end{itemize}
	then we call $E$ a $\Phi$-\textit{algebra} (or \textit{unital} $f$-\textit{algebra}).
	
	Clearly, the multiplicative unit in a $\Phi$-algebra has a certain distinction among the other elements. More generally, when a vector lattice has a particular element of interest, one can consider the notion of a pointed vector lattice.
	
	\begin{definition}\label{D: pos pointed AVL}
		We call a pair $(E,e)$, where $E$ is a vector lattice, and $e\in E$, a \textit{pointed vector lattice}. By a \textit{positively pointed} vector lattice, we mean a pointed vector lattice $(E,e)$ for which $e\in E^+$. If $E$ is Archimedean, then we refer to $(E,e)$ as a \textit{pointed Archimedean vector lattice}. Furthermore, if $E$ is Archimedean and $e\in E^+$, we call $(E,e)$ a \textit{positively pointed Archimedean vector lattice}.
	\end{definition}

	\section{The Order-Theoretic Square Operation on Pointed Vector Lattices}\label{S: square operation}
	
	We introduce the order-theoretic square operation on pointed vector lattices in this section and provide some of its properties, which will be useful to us in the next section.
	
	\begin{definition}\label{D: square closed}
		Given a pointed vector lattice $(E,e)$ and $f\in E$, if
		\[
		\underset{\lambda\in\R}{\sup}\{2\lambda f-\lambda^2e\}
		\]
		exists in $E$, then we write
		\[
		f^{[2]}:=\underset{\lambda\in\R}{\sup}\{2\lambda f-\lambda^2e\}.
		\]
		In this case, we say that $f^{[2]}$ \textit{exists} in $E$ and call $f^{[2]}$ the \textit{order-theoretic square} of $f$ with respect to $e$. Moreover, we set
		\[
		\E:=\left\{f\in E : f^{[2]}\ \text{exists in}\ E\right\}.
		\]
		Finally, we say that $(E,e)$	is \textit{square closed} if $\E=E$. 
	\end{definition}
	
	We stress that, in order to both avoid ambiguity and keep our notation as efficient as possible, we only use the notation $f^{[2]}$ and $\E$ is the context of a pointed vector lattice $(E,e)$. 
	
	\begin{notation}
		Given an $\ell$-algebra $E$ and $e\in E$, we will throughput this paper use $f^{[2]}$ to denote the order-theoretic square of $f$ with respect to $e$ in the pointed vector lattice $(E,e)$ and signify the algebraic square of $f$, as usual, by $f^2$.
	\end{notation}
	
	Our first result provides a wide class of examples of square closed pointed vector lattices.
	
	\begin{proposition}\label{P: phi-alg implies sc}
		If $E$ is an Archimedean $\Phi$-algebra with multiplicative unit $e$, then $(E,e)$ is square closed. Moreover, for every $f\in E$, we have $f^{[2]}=f^2$.
	\end{proposition}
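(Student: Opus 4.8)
The plan is to fix an Archimedean $\Phi$-algebra $E$ with multiplicative unit $e$ and an arbitrary $f \in E$, and to show directly that $\sup_{\lambda \in \R}\{2\lambda f - \lambda^2 e\}$ exists and equals $f^2$. The key identity is the completion of the square: for each $\lambda \in \R$,
\[
f^2 - (2\lambda f - \lambda^2 e) = f^2 - 2\lambda f + \lambda^2 e = (f - \lambda e)^2,
\]
using axioms (i)--(iv), (viii), (ix) to expand the product and the fact that $e$ is the multiplicative unit. Since $E$ is an $f$-algebra, squares are positive: $(f-\lambda e)^2 \in E^+$. Hence $2\lambda f - \lambda^2 e \le f^2$ for every $\lambda$, so $f^2$ is an upper bound for the set $\{2\lambda f - \lambda^2 e : \lambda \in \R\}$.

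Next I would show $f^2$ is the \emph{least} upper bound. Suppose $g \in E$ satisfies $2\lambda f - \lambda^2 e \le g$ for all $\lambda \in \R$; equivalently $(f - \lambda e)^2 \le f^2 - g + (\text{something})$—more precisely, $f^2 - g \ge f^2 - (2\lambda f - \lambda^2 e) = (f-\lambda e)^2 \ge 0$ is the wrong direction, so instead rewrite the hypothesis as $0 \le f^2 - (2\lambda f - \lambda^2 e) - (f^2 - g)$, i.e. $(f - \lambda e)^2 \ge f^2 - g$ fails; let me restate: the hypothesis $2\lambda f - \lambda^2 e \le g$ means $f^2 - g \le f^2 - (2\lambda f - \lambda^2 e) = (f - \lambda e)^2$ for all $\lambda$. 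Set $h := f^2 - g$; then $h \le (f - \lambda e)^2$ for all $\lambda \in \R$, and I must show $h \le 0$. Choosing $\lambda$ appropriately should make $(f - \lambda e)^2$ small in the Archimedean sense: the natural choice is to exploit that $\frac{1}{n}(f - \lambda e)^2$ type estimates or, better, to use that for suitable rational $\lambda$ depending on a parameter $n$, the element $(f - \lambda e)^2$ can be dominated in a way that forces $n h \le$ (a fixed element) for all $n$. I expect to reduce to the universal property of squares in $f$-algebras, perhaps passing to the principal ideal generated by $e$ (where $e$ is an order unit) or using the functional representation of Archimedean $\Phi$-algebras as sublattice subalgebras of $C(X)$ with $e \mapsto \mathbf{1}$, where the identity $x^2 = \sup_\lambda\{2\lambda x - \lambda^2\}$ is just \eqref{eq: x^2} pointwise and the supremum is uniform on compacts.

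The main obstacle is proving the supremum is attained order-theoretically (not merely pointwise in a representation): one must show that if $h$ is below every $(f - \lambda e)^2$ then $h \le 0$ in $E$, which requires transferring a pointwise/uniform statement back to the abstract lattice. The cleanest route is representation: embed $(E,e)$ into some $C(X)$, $X$ compact Hausdorff, via the Yosida-type representation available because $e$ is a (weak, hence here multiplicative) unit in an Archimedean $f$-algebra; then $f^2$ corresponds to the pointwise square, and \eqref{eq: x^2} gives $f^2(x) = \sup_\lambda (2\lambda f(x) - \lambda^2)$ for each $x$, with the $\lambda$ achieving values within $\varepsilon$ being locally constant, so any $h \in E \subseteq C(X)$ with $h \le (f-\lambda e)^2$ for all $\lambda$ satisfies $h(x) \le 0$ at every $x$, whence $h \le 0$. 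An alternative, representation-free argument: fix $n \in \N$ and apply the hypothesis with $\lambda$ ranging so that $(f - \lambda e)^2 \wedge \bigl(\tfrac{1}{n} e\bigr)$-style truncations collapse; but this is more delicate, so I would present the representation argument as primary and remark that a direct argument is possible. Finally, once existence and the value $f^{[2]} = f^2$ are established for each $f$, square closedness $\E = E$ follows immediately.
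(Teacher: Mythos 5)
Your first half is exactly the paper's argument: completing the square gives $f^2-(2\lambda f-\lambda^2e)=(f-\lambda e)^2\ge 0$, so $f^2$ is an upper bound of $\{2\lambda f-\lambda^2e:\lambda\in\R\}$, and your reduction of leastness to the implication ``if $h\le (f-\lambda e)^2$ for all $\lambda\in\R$ then $h\le 0$'' is the right reduction. The paper closes this second step in one line: from $\lambda^2e-2\lambda f+u\ge 0$ for all $\lambda\in\R$, \cite[Proposition~3.3]{HuidP} yields $(2f)^2\le 4eu=4u$, hence $u\ge f^2$, with no representation needed. Note also that your worry about ``transferring the supremum back to the abstract lattice'' is a non-issue: once $f^2$ is known to be an upper bound in $E$, all that remains is the single inequality $u\ge f^2$, and an injective vector lattice homomorphism reflects inequalities (since $\phi(h)\le 0$ iff $\phi(h^+)=0$ iff $h^+=0$), so any faithful representation would suffice for that step.

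The genuine gap is in the representation you lean on. In an Archimedean $\Phi$-algebra the multiplicative unit $e$ is in general only a weak order unit, not a strong one, and then no injective vector lattice homomorphism $\phi$ of $(E,e)$ into $C(X)$, $X$ compact Hausdorff, with $\phi(e)=\mathbf{1}$ can exist: for any $f\in E$ the continuous function $\phi(f)$ on the compact space $X$ is bounded, say $|\phi(f)|\le n\mathbf{1}$, so $\phi(|f|\vee ne)=|\phi(f)|\vee n\mathbf{1}=\phi(ne)$ and injectivity forces $|f|\le ne$; that is, $e$ would have to be a strong order unit. (Take $E=C(\R)$ with $e=\mathbf{1}$ for a concrete Archimedean $\Phi$-algebra where this fails.) The correct target is the Yosida representation into extended continuous functions $C^\infty(X)$, finite on a dense open set, and you would additionally need the (true, but nontrivial and uncited) fact that this representation carries the $f$-algebra multiplication to the pointwise product, so that $(f-\lambda e)^2$ really becomes the pointwise square before you can choose $\lambda$ near $\hat f(x)$ pointwise. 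With those repairs your argument does give $h\le 0$ and hence the proposition; alternatively, the representation can be bypassed entirely by quoting the discriminant-type inequality of Huijsmans and de Pagter, which is how the paper proceeds.
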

	
	\begin{proof}
		Suppose $E$ is an Archimedean $\Phi$-algebra with multiplicative unit $e$. Let $f\in E$, and let $\lambda\in\R$. Then we have
		\[
		f^2-2\lambda f+\lambda^2e=(f-\lambda e)^2\geq 0.
		\]
		It follows that $f^2$ is an upper bound for the set $\{2\lambda f-\lambda^2e : \lambda\in\R\}$.
		
		Next suppose that $u\in E$ satisfies $u\geq 2\lambda f-\lambda^2e$ for every $\lambda\in\R$. Then we have
		\[
		\lambda^2e-2\lambda f+u\geq 0\qquad (\lambda\in\R).
		\]
		By \cite[Proposition~3.3]{HuidP}, we have that $(2f)^2\leq 4eu$, and hence $u\geq f^2$. Therefore, we have that $\underset{\lambda\in\R}{\sup}\{2\lambda f-\lambda^2e\}$ exists in $E$, and that
		\[
		f^2=\underset{\lambda\in\R}{\sup}\{2\lambda f-\lambda^2e\}.
		\]
		Hence $(E,e)$ is square closed, and $f^{[2]}=f^2$ for every $f\in E$.
	\end{proof}
	
	If $(E,e)$ is a positively pointed vector lattice, $f\in\E$, and $f$ is comparable with a scalar multiple of $e$, then the formula for $f^{[2]}$ in Definition~\ref{D: square closed} conveniently simplifies. We record this useful fact, which we will utilize throughout the paper, in our next proposition.
	
	\begin{proposition}\label{P: easy sup}
		Let $(E,e)$ be a positively pointed vector lattice. Fix $f\in E$ and $s,t\in\R$. The following hold.
		\begin{itemize}
			\item[(i)] If $f\leq te$, then $f\in\E$ if and only if $\underset{\lambda\in(-\infty, t]}{\sup}\{2\lambda f-\lambda^2e\}$ exists in $E$. In this case, we have
			\[
			f^{[2]}=\underset{\lambda\in(-\infty, t]}{\sup}\{2\lambda f-\lambda^2e\}.
			\]
			\item[(ii)] If $f\geq se$, then $f\in\E$ if and only if $\underset{\lambda\in[s,\infty)}{\sup}\{2\lambda f-\lambda^2e\}$ exists in $E$. In this case, we get
			\[
			f^{[2]}=\underset{\lambda\in[s, \infty)}{\sup}\{2\lambda f-\lambda^2e\}.
			\]
			\item[(iii)] If $f\in E^+$, then $f\in\E$ if and only if $\underset{\lambda\in\R^+}{\sup}\{2\lambda f-\lambda^2e\}$ exists in $E$. In this case, we have
			\[
			f^{[2]}=\underset{\lambda\in\R^+}{\sup}\{2\lambda f-\lambda^2e\}.
			\]
			\item[(iv)] If $se\leq f\leq te$, then $f\in\E$ if and only if $\underset{\lambda\in[s,t]}{\sup}\{2\lambda f-\lambda^2e\}$ exists in $E$. In this case, we get
			\[
			f^{[2]}=\underset{\lambda\in[s,t]}{\sup}\{2\lambda f-\lambda^2e\}.
			\]
			\item[(v)] If $|f|\leq te$, then $f\in\E$ if and only if $\underset{\lambda\in[-t,t]}{\sup}\{2\lambda f-\lambda^2e\}$ exists in $E$. In this case, we have
			\[
			f^{[2]}=\underset{\lambda\in[-t,t]}{\sup}\{2\lambda f-\lambda^2e\}.
			\] 
		\end{itemize}
	\end{proposition}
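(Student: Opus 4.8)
The plan is to reduce each statement to the simple observation that restricting the index set in a supremum can only decrease the candidate upper bound, and that under the stated comparability hypotheses the ``extra'' indices $\lambda$ contribute elements that are dominated by elements already indexed by the restricted set. Concretely, for part (i), suppose $f \leq te$. For any $\lambda > t$, I would compare $2\lambda f - \lambda^2 e$ with $2t f - t^2 e$: we have $(2\lambda f - \lambda^2 e) - (2tf - t^2 e) = 2(\lambda - t)f - (\lambda^2 - t^2)e = 2(\lambda - t)f - (\lambda - t)(\lambda + t)e = 2(\lambda-t)\big(f - \tfrac{\lambda+t}{2}e\big)$, and since $\lambda - t > 0$ and $f \leq te \leq \tfrac{\lambda+t}{2}e$ (because $\tfrac{\lambda+t}{2} \geq t$ when $\lambda \geq t$), this difference is $\leq 0$. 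Hence every element of $\{2\lambda f - \lambda^2 e : \lambda \in \R\}$ with $\lambda > t$ is dominated by the element corresponding to $\lambda = t$, which lies in the restricted family. Therefore the two sets $\{2\lambda f - \lambda^2 e : \lambda \in \R\}$ and $\{2\lambda f - \lambda^2 e : \lambda \leq t\}$ have exactly the same set of upper bounds, so one supremum exists if and only if the other does, and they coincide when they exist. This gives (i); part (ii) is entirely symmetric, comparing $\lambda < s$ against $\lambda = s$ using $f \geq se$.

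For part (iii), I would simply invoke (ii) with $s = 0$, since $f \in E^+$ means $f \geq 0\cdot e = 0$, and $[0,\infty) = \R^+$. For part (iv), I would apply (i) and (ii) in succession: by (i), $f \in \E$ iff $\sup_{\lambda \leq t}\{2\lambda f - \lambda^2 e\}$ exists and equals $f^{[2]}$; now working within the family indexed by $\lambda \leq t$ and using $f \geq se$, the same domination argument as in (ii) shows indices $\lambda < s$ are redundant (dominated by $\lambda = s$, which satisfies $s \leq t$ so it survives the first restriction), leaving $\lambda \in [s,t]$. Part (v) is the special case of (iv) with $s = -t$ and $t = t$, since $|f| \leq te$ is equivalent to $-te \leq f \leq te$ (here one should note $t \geq 0$ automatically, as $|f| \geq 0$ forces $te \geq 0$, but since we only need $-te \leq f \leq te$ this is immaterial to the argument).

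I do not expect any genuine obstacle here: the entire content is the elementary inequality $2(\lambda - t)(f - \tfrac{\lambda+t}{2}e) \leq 0$ for $\lambda \geq t$, $f \leq te$ (and its mirror image), together with the general lattice-theoretic fact that if $S \subseteq T$ and every element of $T \setminus S$ is below some element of $S$, then $S$ and $T$ have the same upper bounds. The one point requiring a little care is making sure the chained application in (iv) is legitimate --- i.e., that after restricting to $\lambda \leq t$ via (i), the index $\lambda = s$ used to dominate the small indices is still present, which holds precisely because $se \leq f \leq te$ forces $s \leq t$ (comparing $se \leq te$ in the Archimedean, or merely positively pointed, setting; if $e = 0$ the statements are vacuous or trivial and can be dispatched separately, though in fact $se \le te$ with $e \ge 0$ does not force $s \le t$ when $e=0$, so I would instead observe that if $e = 0$ then $f^{[2]} = \sup_\lambda\{2\lambda f\}$ which exists iff $f = 0$, and all five equivalences hold trivially). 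With that caveat handled, the write-up is a short routine verification of the displayed identities.
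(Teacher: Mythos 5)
Your argument is correct and is essentially the paper's own proof: the same domination observation (for $\lambda>t$, the term $2\lambda f-\lambda^2e$ is below the term at $\lambda_0=t$, using $f\leq te\leq\tfrac{\lambda+t}{2}e$), giving equality of upper-bound sets, with (ii) symmetric, (iii) a special case of (ii), (iv) by chaining (i) and (ii), and (v) a special case of (iv). Your extra care about the degenerate case $e=0$ in (iv)--(v) goes slightly beyond the paper's write-up but does not change the approach.
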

	
	\begin{proof}
		(i) Let $t\in\R$. Assume that $f\leq te$, and let $\lambda\in\R$. We will show that there exists $\lambda_0\in(-\infty,t]$ such that
		\begin{equation}\label{eq: easy sup}
			2\lambda f-\lambda^2e\leq 	2\lambda_0 f-\lambda_0^2e. 
		\end{equation}
		To this end, first note that if $\lambda\leq t$, then simply taking $\lambda_0:=\lambda$ satisfies \eqref{eq: easy sup}.
		
		Suppose next that $\lambda>t$. In this case, set $\lambda_0:=t$. Since $f\leq te$ and $e\in E^+$ (as $(E,e)$ is positively pointed), we have
		\[
		2f\leq 2te\leq(\lambda+\lambda_0)e.
		\]
		Multiplying both sides of $2f\leq(\lambda+\lambda_0)e$ by the strictly positive real number $\lambda-\lambda_0$ yields
		\[
		2(\lambda-\lambda_0)f\leq(\lambda^2-\lambda_0^2)e,
		\]
		which is equivalent to
		\[
		2\lambda f-\lambda^2e\leq 2\lambda_0f-\lambda_0^2e.	
		\]
		Hence we have shown that there exits $\lambda_0\in(-\infty,t]$ for which \eqref{eq: easy sup} holds. It follows that $\underset{\lambda\in\R}{\sup}\{2\lambda f-\lambda^2e\}$ exists in $E$ if and only if $\underset{\lambda\in(-\infty,t]}{\sup}\{2\lambda f-\lambda^2e\}$ exists in $E$, in which case we have
		\[
		f^{[2]}=\underset{\lambda\in(-\infty,t]}{\sup}\{2\lambda f-\lambda^2e\}.
		\]

		The proof of (ii) is similar to (i), and (iii) is simply a special case of (ii). Part (iv) is an immediate consequence of (i) and (ii), and clearly, (v) is a special case of (iv).
	\end{proof}
	
	Next, given a pointed vector lattice $(E,e)$, we provide some closure properties of $\E$, which will prove to be useful throughout the rest of the paper.
	
	\begin{proposition}\label{P: closure props} Let $(E,e)$ be a pointed vector lattice. The following hold.
		\begin{itemize}
			\item[(i)] If $f\in\E$ and $t\in\R\setminus\{0\}$, then $t f\in\E$ and $(tf)^{[2]}=t^2f^{[2]}$.
			\item[(ii)] If $f\in\E$ and $t\in\R$, then $f+te\in\E$ and $(f+te)^{[2]}=f^{[2]}+2tf+t^2e$.
		\end{itemize}
	\end{proposition}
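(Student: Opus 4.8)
The plan is to prove both parts by manipulating the defining supremum directly, using that a supremum is preserved under an affine change of the indexing variable together with translation/positive scaling of the elements. For part (i), fix $f\in\E$ and $t\neq 0$. I would observe that for each $\lambda\in\R$ we have the identity $2\lambda(tf)-\lambda^2e = t^2\!\left(2(\lambda/t)f-(\lambda/t)^2e\right)$, and since the map $\lambda\mapsto\lambda/t$ is a bijection of $\R$ onto itself, the set $\{2\lambda(tf)-\lambda^2e:\lambda\in\R\}$ equals $\{t^2(2\mu f-\mu^2 e):\mu\in\R\}$. Multiplication by the nonzero scalar $t^2>0$ is an order isomorphism of $E$, hence it sends the supremum of $\{2\mu f-\mu^2 e:\mu\in\R\}$ (which exists and equals $f^{[2]}$ by hypothesis) to the supremum of the scaled set. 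Therefore $\sup_{\lambda\in\R}\{2\lambda(tf)-\lambda^2 e\}$ exists and equals $t^2 f^{[2]}$, i.e.\ $tf\in\E$ and $(tf)^{[2]}=t^2 f^{[2]}$.

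For part (ii), fix $f\in\E$ and $t\in\R$. Here I would complete the square: for each $\lambda\in\R$,
\[
2\lambda(f+te)-\lambda^2 e = 2\lambda f - \lambda^2 e + 2\lambda t e = -( \lambda - t)^2 e + 2\lambda f + t^2 e = \bigl(2(\lambda-t)f-(\lambda-t)^2 e\bigr) + 2tf + t^2 e,
\]
where the last equality uses $2\lambda f=2(\lambda-t)f+2tf$ and $-( \lambda-t)^2e = -\lambda^2 e + 2\lambda t e - t^2 e$, so that the bracketed term is exactly $2(\lambda-t)f-(\lambda-t)^2e$. Since $\mu\mapsto\mu+t$ is a bijection of $\R$, the set $\{2\lambda(f+te)-\lambda^2 e:\lambda\in\R\}$ equals $\{(2\mu f-\mu^2 e)+(2tf+t^2 e):\mu\in\R\}$. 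Translation by the fixed element $2tf+t^2 e$ is an order isomorphism of $E$ (indeed $x\le y\iff x+c\le y+c$), so it carries $\sup_{\mu}\{2\mu f-\mu^2 e\}=f^{[2]}$ to the supremum of the translated set. Hence $f+te\in\E$ and $(f+te)^{[2]}=f^{[2]}+2tf+t^2 e$.

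Both arguments rest on the elementary fact that in an ordered vector space the supremum of a set is preserved under adding a fixed vector and under scaling by a fixed positive scalar, and that reindexing by a bijection of the parameter set does not change a set. There is no real obstacle here; the only point requiring a little care is the algebraic bookkeeping in the completion of the square in part (ii) — in particular keeping track of the cross term $2\lambda t e$ and splitting it correctly so that what remains genuinely has the form $2\mu f-\mu^2 e$ with $\mu=\lambda-t$ — but once that identity is written out, the conclusion is immediate. (Note the hypothesis $t\neq 0$ in (i) is needed only so that division by $t$ makes sense; the case $t=0$ is trivial and anyway excluded.)
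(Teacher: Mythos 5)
Your proof is correct and follows essentially the same route as the paper: part (i) uses the identity $2\lambda(tf)-\lambda^2e=t^2\bigl(2(\lambda/t)f-(\lambda/t)^2e\bigr)$ together with reindexing and positive scaling preserving suprema, and part (ii) is the same completion-of-the-square computation (the paper substitutes $\theta=\lambda+t$ rather than $\mu=\lambda-t$, which is just the same identity read in the opposite direction) combined with translation invariance of suprema. No gaps; nothing further is needed.
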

	
	\begin{proof}
		(i)  Let $f\in\E$. If $t\in\R$ and $t\neq0$, then, for any $\lambda\in\R$, we have
		\[
		2\lambda t f-\lambda^2e=t^2\left(2\dfrac{\lambda}{t}f-\left(\dfrac{\lambda}{t}\right)^2e\right).
		\]
		Hence we get
		\[
		\underset{\lambda\in\mathbb{R}}{\sup}\{2\lambda tf-\lambda^2e\}=t^2\, \underset{\lambda\in\mathbb{R}}{\sup}\left\{2\dfrac{\lambda}{t}f-\left(\dfrac{\lambda}{t}\right)^2e\right\}=t^2f^{[2]}.
		\]
		
		(ii) Let $f\in\E$ and $t\in\R$. 
		Consider $\lambda\in\R$, and set $\theta:=\lambda+t$. We have \begin{align*}
			2\lambda f-\lambda^2e+2t f+t^2e&=2(\lambda+t) f-(\lambda^2-t^2)e\\
			&=2\theta f-\bigl((\theta-t)^2-t^2\bigr)e\\
			&=2\theta f-(\theta^2-2\theta t)e\\
			&=2\theta(f+te)-\theta^2e.
		\end{align*}
		Thus we get
		\begin{align*}
			(f+te)^{[2]}&=\underset{\theta\in\R}{\sup}\{2\theta(f+te)-\theta^2e\}\\
			&=\underset{\lambda\in\R}{\sup}\{2\lambda f-\lambda^2e+2tf+t^2e\}\\
			&=\underset{\lambda\in\R}{\sup}\{2\lambda f-\lambda^2e\}+2tf+t^2e\\
			&=f^{[2]}+2tf+t^2e.
		\end{align*}
	\end{proof}
	
	The following result implies that Proposition~\ref{P: closure props}(i) also holds for $t=0$ in positively pointed vector lattices.
	
	\begin{proposition}\label{P: 0^2=0 iff e pos}
		Let $(E,e)$ be a pointed vector lattice. We have $0^{[2]}=0$ if and only if $(E,e)$ is positively pointed.
	\end{proposition}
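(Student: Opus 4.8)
The plan is to unwind the definition of $0^{[2]}$ directly. Since $2\lambda\cdot 0-\lambda^2 e=-\lambda^2 e$ for every $\lambda\in\R$, the statement ``$0^{[2]}$ exists and equals $0$'' means precisely that $\underset{\lambda\in\R}{\sup}\{-\lambda^2 e\}$ exists in $E$ and equals $0$. So the whole proposition reduces to the equivalence: this supremum is $0$ if and only if $e\in E^+$.

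For the ``if'' direction, I would assume $(E,e)$ is positively pointed, so that $e\in E^+$. Then $\lambda^2 e\in E^+$ for each $\lambda\in\R$, hence $-\lambda^2 e\leq 0$, which shows that $0$ is an upper bound of the set $\{-\lambda^2 e:\lambda\in\R\}$. On the other hand, taking $\lambda=0$ shows that $0$ itself belongs to this set, so $0$ is its least upper bound. Therefore $\underset{\lambda\in\R}{\sup}\{-\lambda^2 e\}=0$, i.e. $0^{[2]}=0$.

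For the ``only if'' direction, I would assume $0^{[2]}=0$. By definition this means $0$ is the supremum of $\{-\lambda^2 e:\lambda\in\R\}$ in $E$; in particular $0$ is an upper bound of that set. Specializing to $\lambda=1$ gives $-e\leq 0$, that is, $e\in E^+$, so $(E,e)$ is positively pointed. I do not expect any genuine obstacle here: the only thing to keep in mind is that the hypothesis ``$0^{[2]}=0$'' already encodes the existence of the supremum, which is what makes $0$ an upper bound of the set, and the positivity of $e$ then follows simply by choosing a convenient value of $\lambda$.
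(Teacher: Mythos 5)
Your proof is correct and follows essentially the same route as the paper: positivity of $e$ makes $0$ an upper bound and $\lambda=0$ pins it down as the supremum, while the converse extracts $-e\leq 0$ from the upper-bound property (the paper uses all $\lambda$, you use $\lambda=1$; the paper checks arbitrary upper bounds $u\geq 0$ via $\lambda=0$, you note $0$ lies in the set — trivially equivalent). No gaps.
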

	
	\begin{proof}
		First suppose that $0^{[2]}=0$. Then we have
		\[
		\underset{\lambda\in\R}{\sup}\{-\lambda^2e\}=0.
		\]
		Thus, for every $\lambda\in\R$, we have $-\lambda^2e\leq 0$. Hence we get $e\in E^+$.
		
		Second, assume that $e\in E^+$. Then we have $-\lambda^2e\leq 0$ for every $\lambda\in \R$. Suppose next that $u\in E$ satisfies $-\lambda^2e\leq u$ for every $\lambda\in\R$. Then setting $\lambda=0$ yields $0\leq u$. It follows that
		\[
		0=\underset{\lambda\in\R}{\sup}\{-\lambda^2e\}=\underset{\lambda\in\R}{\sup}\{2\lambda0-\lambda^2e\}=:0^{[2]}.
		\]
	\end{proof}
	
	As a corollary to Propositions~\ref{P: closure props} and \ref{P: 0^2=0 iff e pos}, we have the following result.
	
	\begin{corollary}\label{C: Span e contained}
		If $(E,e)$ is a positively pointed vector lattice, then $e^{[2]}=e$, and moreover, $\mathrm{Span}\{e\}\subseteq\E$.
	\end{corollary}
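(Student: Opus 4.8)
The plan is to obtain both claims as immediate consequences of Propositions~\ref{P: closure props} and \ref{P: 0^2=0 iff e pos}, with no fresh computation of suprema required. The key observation is that, because $(E,e)$ is positively pointed, Proposition~\ref{P: 0^2=0 iff e pos} already supplies one element of $\E$, namely $0$, with $0^{[2]}=0$; from this single base point the closure property under translation by multiples of $e$ produces the entire line $\mathrm{Span}\{e\}$.

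Concretely, I would proceed as follows. First, invoke Proposition~\ref{P: 0^2=0 iff e pos} to get $0\in\E$ with $0^{[2]}=0$. Next, apply Proposition~\ref{P: closure props}(ii) with $f=0$: for each $t\in\R$ this yields $te=0+te\in\E$ together with the formula $(te)^{[2]}=0^{[2]}+2t\cdot 0+t^2e=t^2e$. Since $\mathrm{Span}\{e\}=\{te:t\in\R\}$, this establishes $\mathrm{Span}\{e\}\subseteq\E$. Finally, specializing to $t=1$ gives $e^{[2]}=e$, which is the first assertion.

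I do not anticipate any real obstacle: the statement is a bookkeeping corollary of the two cited results. The one mild subtlety worth flagging is that one should route through Proposition~\ref{P: closure props}(ii) (translation) rather than Proposition~\ref{P: closure props}(i) (scaling), since the latter is only stated for nonzero scalars and so does not by itself cover the element $0\in\mathrm{Span}\{e\}$; using part (ii) with $f=0$ handles all $t\in\R$ uniformly and simultaneously delivers the value $e^{[2]}=e$.
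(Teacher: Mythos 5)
Your proof is correct, and it is a mild but genuine streamlining of the paper's argument. The paper first establishes $e^{[2]}=e$ by a direct computation with the defining supremum: since $2\lambda-\lambda^2\leq 1$ and $e\in E^+$, the element $e$ is an upper bound of $\{2\lambda e-\lambda^2e : \lambda\in\R\}$, and any upper bound $u$ satisfies $u\geq e$ (take $\lambda=1$); only afterwards does it combine this with Proposition~\ref{P: 0^2=0 iff e pos} and Proposition~\ref{P: closure props}(ii) to pass from $t\in\{0,1\}$ to all of $\mathrm{Span}\{e\}$. You instead take $0\in\E$ with $0^{[2]}=0$ (the positively pointed direction of Proposition~\ref{P: 0^2=0 iff e pos}) as the sole base point and apply Proposition~\ref{P: closure props}(ii) with $f=0$, which is legitimate since that proposition needs no positivity or Archimedean hypothesis; this yields $te\in\E$ together with $(te)^{[2]}=t^2e$ for every $t\in\R$, and the case $t=1$ gives $e^{[2]}=e$ with no fresh supremum computation. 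Both routes are valid: yours is more economical and even delivers the slightly stronger by-product $(te)^{[2]}=t^2e$ for all $t\in\R$, while the paper's direct verification of $e^{[2]}=e$ is self-contained and does not route that identity through the translation formula. Your caution about avoiding Proposition~\ref{P: closure props}(i), which is stated only for $t\neq 0$, is also exactly the right one.
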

	
	\begin{proof}
		Let $(E,e)$ be a positivity pointed vector lattice. Fix $\lambda\in\R$. Since $2\lambda-\lambda^2\leq 1$ and $e\in E^+$, we have
		\[
		2\lambda e-\lambda^2e=(2\lambda-\lambda^2)e\leq e.
		\]
		Thus $e$ is an upper bound for the set $\{2\lambda e-\lambda^2e : \lambda\in\R\}$. Next assume that $u$ is an upper bound for $\{2\lambda e-\lambda^2e : \lambda\in\R\}$. Then, setting $\lambda=1$, we get $u\geq 2e-e=e$. Hence $e$ is the least upper bound of $\{2\lambda e-\lambda^2e : \lambda\in\R\}$; that is $e\in\E$ with $e^{[2]}=e$.
		
		From the above argument and Proposition~\ref{P: 0^2=0 iff e pos}, we have $te\in\E$ for $t\in\{0,1\}$. From Proposition\ref{P: closure props}(ii), we get $te\in\E$ for all $t\in\R$. It follows that $\mathrm{Span}\{e\}\subseteq E^{[2]}$.
	\end{proof}
	
	Given a positively pointed vector lattice $(E,e)$, it is possible that $\mathrm{Span}\{e\}=\E$, even if $E$ is infinite-dimensional, as the following example demonstrates.
	
	\begin{example}\label{ex: pp}
		Fix $n\in\N\cup\{0\}$, and consider the Archimedean vector sublattice $PP_n[0,1]$ of $C[0,1]$ consisting of all piecewise polynomials $f\colon[0,1]\to\R$ of degree at most $n$. Let $\mathbf{1}$ denote the constant function on $[0,1]$ taking the value 1. Then $\bigl(PP_n[0,1], \mathbf{1}\bigr)$ is a positively pointed Archimedean vector lattice, and
		\[
		\bigl(PP_n[0,1]\bigr)^{[2]}=PP_{\lfloor\frac{n}{2}\rfloor}[0,1].
		\]
		In particular, we have
		\[
		(PP_1[0,1])^{[2]}=\mathrm{Span}\{\mathbf{1}\}.
		\]
		
		Indeed, if $n=0$, then $PP_{n}[0,1]$ is the space of all constant functions on $[0,1]$, and is therefore a $\Phi$-algebra. From Proposition~\ref{P: phi-alg implies sc}, we have $(PP_0[0,1])^{[2]}=PP_0[0,1]$. Next consider $n\in\N$. If $f\in PP_{\lfloor\frac{n}{2}\rfloor}[0,1]$, then $f$ can be expressed in the form
		\[
		f(x)=\sum_{k=1}^{m}f_k(x)\chi_{A_k}(x),
		\]
		where, for each $k\in\{1,...,m\}$, $f_k$ is a polynomial of degree at most $\lfloor\frac{n}{2}\rfloor$, $A_k$ is a subinterval of $[0,1]$, $A_i\cap A_j=\varnothing\quad (i\neq j)$, and $\bigcup_{k=1}^m A_k=[0,1]$. Since $C[0,1]$ is a $\Phi$-algebra with multiplicative unit $\mathbf{1}$, $(C[0,1],\mathbf{1})$ is square closed by Proposition~\ref{P: phi-alg implies sc}. Hence $f^{[2]}$ exists in $C[0,1]$, and in $C[0,1]$ we clearly have
		\[
		f^{[2]}=\sum_{k=1}^{m}f_k^2\chi_{A_k}.
		\]
		Since $f\in PP_{\lfloor\frac{n}{2}\rfloor}[0,1]$, we have $f_k^2\in PP_{n}[0,1]$ for all $k\in\{1,...,m\}$. It follows that $f^{[2]}\in (PP_{n}[0,1])^{[2]}$.
		
		Assume next that $f\in \bigl(PP_n[0,1]\bigr)^{[2]}\setminus PP_{\lfloor\frac{n}{2}\rfloor}[0,1]$. Suppose $u\in PP_n[0,1]$ is such that $2\lambda f-\lambda^2\mathbf{1}\leq u$ holds for all $\lambda\in\R$. Then there exists $s,t\in[0,1]$ with $s<t$ and polynomials $p,q\colon(s,t)\to\R$, with the degree of $p$ greater than $\lfloor\frac{n}{2}\rfloor$ and at most $n$, and the degree of $q$ at most $n$, for which $f(x)=p(x)$ and $u(x)=q(x)\ \bigl(x\in(s,t)\bigr)$. Therefore, for $x\in(s,t)$ and $\lambda\in\R$, we have $2\lambda p(x)-\lambda^2<q(x)$. Taking the supremum over every $\lambda\in\R$, we obtain $\bigl(p(x)\bigr)^2\leq q(x)\quad \bigl(x\in(s,t)\bigr)$. This inequality implies that the degree of $q$ is larger than $n$, a contradiction. Hence $f^{[2]}$ does not exist in $PP_n[0,1]$.
	\end{example} 
	
	Recall that if $E$ is an associative algebra, then the algebraic square can encode some important information about $E$. Indeed, for a vector lattice and commutative associative algebra $E$, we have that $E$ is an $\ell$-algebra if and only if
	\begin{equation}\label{eq: Riesz alg}
		f,g\in E^+\implies f^2+g^2\leq(f+g)^2.
	\end{equation}
	For further examples, given an $f$-algebra $E$ and $f,g\in E^+$, we have
	\begin{equation}\label{eq: f-alg 1}
		f^2\wedge g^2>0\implies f\wedge g>0, 
	\end{equation}
	and
	\begin{equation}\label{eq: f-alg 2}
		(f+g)^2\neq f^2+g^2\implies f\wedge g>0.
	\end{equation}
	
	Our next result shows that the order-theoretic square also respects the properties \eqref{eq: Riesz alg}, \eqref{eq: f-alg 1}, and \eqref{eq: f-alg 2} above in square closed positively pointed vector lattices. It also shows an additional convenient property of the order theoretic square which we will employ in the next section.
	
	\begin{proposition}\label{P: basic props}
		Let $(E,e)$ be a square closed positively pointed vector lattice. The following are true.
		\begin{itemize}
			\item[(i)]  For all $f,g\in E^+$, we have that $f^{[2]}+g^{[2]}\leq(f+g)^{[2]}$.
			\item[(ii)] If $f,g\in E$ satisfy $f\wedge g=0$, then $f^{[2]}\wedge g^{[2]}=0$.
			\item[(iii)] If $f,g\in E$ satisfy $f\wedge g=0$, then $(f+g)^{[2]}=f^{[2]}+g^{[2]}$.
			\item[(iv)] For every $f\in E$, we have $f^{[2]}=|f|^{[2]}$.
		\end{itemize}
	\end{proposition}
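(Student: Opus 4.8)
The plan is to reduce all four parts to elementary manipulations of suprema, using three standard tools: the identity $\sup A+\sup B=\sup(A+B)$, valid in any vector lattice whenever $\sup A$ and $\sup B$ exist; the infinite distributive law $\bigl(\sup_{\alpha}x_{\alpha}\bigr)\wedge y=\sup_{\alpha}(x_{\alpha}\wedge y)$, which holds in any vector lattice provided $\sup_{\alpha}x_{\alpha}$ exists; and Proposition~\ref{P: easy sup}(iii), which, because $e\in E^{+}$, lets us restrict the parameter range to $\R^{+}$ in the defining supremum of $h^{[2]}$ whenever $h\in E^{+}$. I also record at the outset that taking the parameter $\lambda=0$ shows $h^{[2]}\ge 0$ for every $h\in E$.

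For (i), since $f$, $g$, and $f+g$ all lie in $E^{+}$, Proposition~\ref{P: easy sup}(iii) rewrites each of $f^{[2]}$, $g^{[2]}$, $(f+g)^{[2]}$ as a supremum over $\R^{+}$. Combining the first two via additivity of suprema gives $f^{[2]}+g^{[2]}=\sup\bigl\{2\lambda f+2\mu g-(\lambda^{2}+\mu^{2})e:\lambda,\mu\in\R^{+}\bigr\}$. For fixed $\lambda,\mu\in\R^{+}$, put $\rho:=\lambda\vee\mu$; then $2\lambda f\le 2\rho f$, $2\mu g\le 2\rho g$, and $\rho^{2}e\le(\lambda^{2}+\mu^{2})e$, so $2\lambda f+2\mu g-(\lambda^{2}+\mu^{2})e\le 2\rho(f+g)-\rho^{2}e\le(f+g)^{[2]}$. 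Taking the supremum over $\lambda,\mu$ yields (i). The only point requiring a little foresight is the choice of $2\rho(f+g)-\rho^{2}e$ as the dominating element.

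For (ii), $f,g\in E^{+}$ gives $f^{[2]},g^{[2]}\ge 0$, hence $f^{[2]}\wedge g^{[2]}\ge 0$. Applying the infinite distributive law twice — first to $f^{[2]}=\sup_{\lambda\in\R^{+}}\{2\lambda f-\lambda^{2}e\}$ and then to $g^{[2]}$ — gives $f^{[2]}\wedge g^{[2]}=\sup\bigl\{(2\lambda f-\lambda^{2}e)\wedge(2\mu g-\mu^{2}e):\lambda,\mu\in\R^{+}\bigr\}$. Since $e\in E^{+}$, each term is at most $(2\lambda f)\wedge(2\mu g)$, which equals $0$ because $f\wedge g=0$ forces $(2\lambda f)\wedge(2\mu g)=0$ for $\lambda,\mu\ge 0$; hence $f^{[2]}\wedge g^{[2]}\le 0$ and (ii) follows. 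Part (iii) then falls out: $\ge$ is (i), while for $\le$ we use $f\wedge g=0$ to write $f+g=f\vee g$, so (again by Proposition~\ref{P: easy sup}(iii), together with $2\nu(f\vee g)=(2\nu f)\vee(2\nu g)$ for $\nu\ge 0$ and translation invariance of $\vee$) $(f+g)^{[2]}=\sup_{\nu\in\R^{+}}\bigl\{(2\nu f-\nu^{2}e)\vee(2\nu g-\nu^{2}e)\bigr\}$, and each term is $\le f^{[2]}\vee g^{[2]}=f^{[2]}+g^{[2]}$, the last equality by (ii) together with $a\vee b+a\wedge b=a+b$.

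For (iv), the inequality $f^{[2]}\le|f|^{[2]}$ comes from $2\lambda f-\lambda^{2}e\le 2|\lambda|\,|f|-|\lambda|^{2}e\le|f|^{[2]}$ for each $\lambda\in\R$ (using $\lambda f\le|\lambda|\,|f|$ and $e\in E^{+}$); the reverse inequality comes from $|f|\in E^{+}$, whence $|f|^{[2]}=\sup_{\mu\in\R^{+}}\{2\mu|f|-\mu^{2}e\}$, and for $\mu\ge 0$ we have $2\mu|f|-\mu^{2}e=(2\mu f-\mu^{2}e)\vee(-2\mu f-\mu^{2}e)\le f^{[2]}$. Overall the proof is not computationally hard; the main obstacle is organizational — spotting that one should collapse the two free parameters into a single sum-set in (i) and (ii), and that (iii) and (iv) are best handled through the substitutions $f+g=f\vee g$ and $|f|=f\vee(-f)$ rather than head-on.
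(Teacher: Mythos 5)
Your proof is correct and follows essentially the same route as the paper: reduce to suprema over $\R^{+}$ via Proposition~\ref{P: easy sup}(iii), collapse the two parameters by a single dominating one in (i) and (ii), and exploit $f+g=f\vee g$ for disjoint positive elements and $|f|=f\vee(-f)$ in (iii) and (iv). The only differences are cosmetic — you dominate with $\rho=\lambda\vee\mu$ where the paper uses $\sqrt{\alpha^{2}+\beta^{2}}$, and in (iv) you argue by direct upper bounds rather than splitting suprema, which if anything is slightly cleaner.
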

	
	\begin{proof}
		(i) Let $f,g\in E^+$, let $\alpha,\beta\in\R^+$ be arbitrary, and set $\theta:=\sqrt{\alpha^2+\beta^2}$. Using Proposition~\ref{P: easy sup}(iii) in the second identity below, we obtain
		\begin{align*}
			2\alpha f-\alpha^2e+2\beta g-\beta^2e&=2(\alpha f+\beta g)-(\alpha^2+\beta^2)e\\
			&\leq 2\theta(f+g)-\theta^2e\\
			&\leq\underset{\lambda\in\R^+}{\sup}\{2\lambda(f+g)-\lambda^2e\}\\
			&=(f+g)^{[2]}.
		\end{align*}
		Taking suprema over all $\alpha,\beta\in\R^+$ and utilizing Proposition\ref{P: easy sup}(iii) twice more, we get
		\[
		f^{[2]}+g^{[2]}\leq(f+g)^{[2]}.
		\]
		
		(ii) Suppose $f,g\in E$ satisfy $f\wedge g=0$. Then $f,g\in E^+$. Moreover, we have
		\[
		f^{[2]}=\underset{\lambda\in\R}{\sup}\{2\lambda f-\lambda^2e\}\geq 2\cdot 0f-0^2e=0. 
		\]
		Similarly, $g^{[2]}\geq 0$. We thus have $f^{[2]}\wedge g^{[2]}\geq 0$.
		
		On the other hand, let $\alpha,\beta\in\R^+$ be arbitrary. Then we get
		\[
		(2\alpha f-\alpha^2e)\wedge(2\beta g-\beta^2e)\leq (2\alpha f)\wedge(2\beta g)\leq 2\max\{\alpha,\beta\}(f\wedge g)=0.
		\]
		Taking suprema over all $\alpha,\beta\in\R^+$ and employing Proposition~\ref{P: easy sup}(iii) twice again, we obtain $f^{[2]}\wedge g^{[2]}\leq 0$. We conclude that
		\[
		f^{[2]}\wedge g^{[2]}=0.
		\]
		
		(iii) Let $f,g\in E$ be such that $f\wedge g=0$. Then $f,g\in E^+$, so part (i) of this proposition implies that $(f+g)^{[2]}\geq f^{[2]}+g^{[2]}$. We will further show that $(f+g)^{[2]}\leq f^{[2]}+g^{[2]}$. Indeed, using Proposition~\ref{P: easy sup}(iii) in the first and fourth equalities below, and part (ii) of this proposition in the fifth identity below, we obtain
		\begin{align*}
			(f+g)^{[2]}&=\underset{\lambda\in\R^+}{\sup}\{2\lambda(f+g)-\lambda^2e\}\\
			&=\underset{\lambda\in\R^+}{\sup}\{2\lambda(f\vee g)-\lambda^2e\}\\
			&=\underset{\lambda\in\R^+}{\sup}\{(2\lambda f-\lambda^2e)\vee(2\lambda g-\lambda^2e)\}\\
			&\leq\underset{\alpha,\beta\in\R^+}{\sup}\{(2\alpha f-\alpha^2e)\vee(2\beta g-\beta^2e)\}\\
			&=f^{[2]}\vee g^{[2]}\\
			&=f^{[2]}+g^{[2]}. 
		\end{align*}
		
		(iv) Let $f\in E$. Then we have
		\begin{align*}
			f^{[2]}&=\underset{\lambda\in\R}{\sup}\{2\lambda f-\lambda^2e\}\\
			&=\underset{\lambda\in\R^+}{\sup}\{2\lambda f-\lambda^2e\}\vee\underset{\lambda\in\R^+}{\sup}\{2\lambda(-f)-\lambda^2e\}\\
			&\leq\underset{\lambda\in\R}{\sup}\{2\lambda|f|-\lambda^2e\}\\
			&=|f|^{[2]}.
		\end{align*}
		
		On the other hand, using Proposition~\ref{P: easy sup}(iii) in the first equality below, we obtain
		\begin{align*}
			|f|^{[2]}&=\underset{\lambda\in\R^+}{\sup}\{2\lambda|f|-\lambda^2e\}\\
			&=\underset{\lambda\in\R^+}{\sup}\{2\lambda(f\vee-f)-\lambda^2e\}\\
			&=\underset{\lambda\in\R^+}{\sup}\{(2\lambda f-\lambda^2e)\vee(2\lambda(-f)-\lambda^2e)\}\\
			&\leq\underset{\alpha,\beta\in\R}{\sup}\{(2\alpha f-\alpha^2e)\vee(2\beta(-f)-\beta^2e)\}\\
			&=\underset{\alpha\in\R}{\sup}\{2\alpha f-\alpha^2e\}\vee\underset{\beta\in\R}{\sup}\{2\beta(-f)-\beta^2e\}\\
			&=\underset{\alpha\in\R}{\sup}\{2\alpha f-\alpha^2e\}\vee\underset{\beta\in\R}{\sup}\{2(-\beta)f-(-\beta)^2e\}\\
			&=\underset{\alpha\in\R}{\sup}\{2\alpha f-\alpha^2e\}\vee\underset{\gamma\in\R}{\sup}\{2\gamma f-\gamma^2e\}\\
			&=f^{[2]}.
		\end{align*}
		Therefore, we obtain $f^{[2]}=|f|^{[2]}$.
	\end{proof}
	
	We close this section with the following order continuity property of the map $f\mapsto f^{[2]}$ in a square closed positively pointed vector lattice. This proposition will be needed for our main result Theorem~\ref{T: main}.
	
	\begin{proposition}\label{P: ord cts} Let $(E,e)$ be a square closed positively pointed vector lattice. If $\{x_n\}_{n\in\N}$ is a sequence in $E^+$ and $f\in E^+$ are such that $x_n\uparrow f$, then $\left\{x_n^{[2]}\right\}_{n\in\N}$ is a sequence in $E^+$, and $x_n^{[2]}\uparrow f^{[2]}$.
	\end{proposition}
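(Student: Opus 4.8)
The plan is to verify the three assertions separately: that each $x_n^{[2]}$ is positive, that the sequence $\{x_n^{[2]}\}_{n\in\N}$ increases and is bounded above by $f^{[2]}$, and that $f^{[2]}$ is the \emph{least} such upper bound. For positivity I would simply note that $2\cdot 0\cdot x_n-0^2e=0$ lies in the set whose supremum defines $x_n^{[2]}$, so $x_n^{[2]}\geq 0$ — the same observation made in the proof of Proposition~\ref{P: basic props}(ii).

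For monotonicity, given $m\geq n$ we have $x_n\leq x_m$ with both in $E^+$, so $2\lambda x_n-\lambda^2e\leq 2\lambda x_m-\lambda^2e$ for every $\lambda\in\R^+$; taking suprema over $\lambda\in\R^+$ and applying Proposition~\ref{P: easy sup}(iii) (legitimate since $x_n,x_m\in E^+$) gives $x_n^{[2]}\leq x_m^{[2]}$. The same argument with $f$ in place of $x_m$, using $x_n\leq f$, shows $x_n^{[2]}\leq f^{[2]}$ for all $n$, so $f^{[2]}$ is an upper bound of $\{x_n^{[2]}\}_{n\in\N}$.

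The one step with any content is showing $f^{[2]}\leq u$ for an arbitrary upper bound $u$ of $\{x_n^{[2]}\}_{n\in\N}$. The key is to interchange the supremum over $n$ with the supremum over $\lambda$ by \emph{fixing $\lambda$ first}, which reduces the problem to the affine-in-$x_n$ expressions $2\lambda x_n-\lambda^2e$ — to which no continuity argument need be applied. Concretely, fix $\lambda\in\R^+$; then for each $n$ we have $2\lambda x_n-\lambda^2e\leq x_n^{[2]}\leq u$, i.e.\ $2\lambda x_n\leq u+\lambda^2e$. If $\lambda>0$, dividing by $2\lambda$ gives $x_n\leq\frac{1}{2\lambda}(u+\lambda^2e)$ for every $n$, whence $f\leq\frac{1}{2\lambda}(u+\lambda^2e)$ because $x_n\uparrow f$, i.e.\ $2\lambda f-\lambda^2e\leq u$; and for $\lambda=0$ one has $2\lambda f-\lambda^2e=0\leq u$ since $u\geq x_1^{[2]}\geq 0$. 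Hence $u$ dominates $\{2\lambda f-\lambda^2e:\lambda\in\R^+\}$, and since $f\in E^+$, Proposition~\ref{P: easy sup}(iii) yields $f^{[2]}=\underset{\lambda\in\R^+}{\sup}\{2\lambda f-\lambda^2e\}\leq u$. Putting the pieces together gives $x_n^{[2]}\uparrow f^{[2]}$.

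I do not expect a real obstacle; the only delicate point is the order of quantifiers just highlighted — one should resist passing to the limit $x_n\to f$ inside the supremum over $\lambda$, and instead fix $\lambda$ and exploit that $x_n^{[2]}$ dominates the single element $2\lambda x_n-\lambda^2e$, on which $x_n\mapsto 2\lambda x_n-\lambda^2e$ is order continuous for free.
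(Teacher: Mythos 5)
Your proposal is correct and follows essentially the same route as the paper's proof: positivity via the $\lambda=0$ term, monotonicity and the upper bound $f^{[2]}$ via Proposition~\ref{P: easy sup}(iii), and the least-upper-bound step by fixing $\lambda>0$, deducing $x_n\leq\frac{1}{2\lambda}(u+\lambda^2e)$, and passing to the supremum over $n$ using $x_n\uparrow f$ before invoking Proposition~\ref{P: easy sup}(iii) once more. No gaps.
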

	
	\begin{proof}
		Let $f\in E^+$, and suppose $\{x_n\}_{n\in\N}$ is a sequence in $E^+$ such that $x_n\uparrow f$. We claim that $0\leq x_n^{[2]}\uparrow f^{[2]}$. To verify this assertion, fix $n\in\N$. We have
		\[
		x_n^{[2]}=\underset{\lambda\in\R}{\sup}\{2\lambda x_n-\lambda^2e\}\geq2\cdot 0x_n-0^2e=0.
		\]
		Therefore, using Proposition~\ref{P: easy sup}(iii) twice, we have
		\[
		x_{n+1}^{[2]}=\underset{\lambda\in\R^+}{\sup}\{2\lambda x_{n+1}-\lambda^2e\}\geq\underset{\lambda\in\R^+}{\sup}\{2\lambda x_n-\lambda^2e\}=x_n^{[2]}.
		\]
		Thus $\left\{x_n^{[2]}\right\}_{n\in\N}$ is an increasing sequence in $E^+$.
		
		Next let $\theta\in\mathbb{R}^+$, and fix $n\in\N$. From the assumption that $x_n\leq f$, and from Proposition~\ref{P: easy sup}(iii), we obtain 
		\[
		2\theta x_n-\theta^2e\leq 2\theta f-\theta^2e\leq\underset{\lambda\in\R^+}{\sup}\{2\lambda f-\lambda^2e\}=f^{[2]}.
		\]
		We thus have
		\[
		x_n^{[2]}=\underset{\theta\in\R^+}{\sup}\{2\theta x_n-\theta^2e\}\leq f^{[2]}.
		\]
		
		Assume next that $u\in E$ satisfies $x_n^{[2]}\leq u$ for all $n\in\N$. It follows that $u\geq 0$. Let $\theta\in\R^+$. If $\theta=0$, we trivially have
		\[
		2\theta f - \theta^2e\leq u.
		\]
		Suppose $\theta>0$. From Proposition~\ref{P: easy sup}(iii), we get
		\begin{align*}
			2\theta x_n-\theta^2e&\leq\underset{\lambda\in\R^+}{\sup}\{2\lambda x_n-\lambda^2e\}\\
			&=x_n^{[2]}\\
			&\leq u.
		\end{align*}
		Thus
		\[
		x_n\leq\dfrac{\theta^2e+u}{2\theta},
		\]
		and hence taking the supremum over all $n\in\N$ gives us
		\[
		f\leq\dfrac{\theta^2e+u}{2\theta}.
		\]
		It follows that
		\[
		2\theta f-\theta^2e\leq u.
		\]
		Finally, utilizing Proposition~\ref{P: easy sup}(iii) once more, we obtain
		\[
		f^{[2]}=\underset{\theta\in\R^+}{\sup}\{2\theta f-\theta^2e\}\leq u.
		\]
		We conclude that $x_n^{[2]}\uparrow f^{[2]}$.
	\end{proof}
	
	\section{Multiplication}
	
	We define a multiplication on square closed pointed Archimedean vector lattices as follows.
	
	\begin{definition}\label{D: multn}
		Suppose $(E,e)$ is a square closed pointed Archimedean vector lattice. We define
		\[
		fg:=\dfrac{1}{2}\left((f+g)^{[2]}-\left(f^{[2]}+g^{[2]}\right)\right)\qquad (f,g\in E).
		\]
	\end{definition}
	
	While Definition~\ref{D: multn} is arguably a natural avenue for trying to define a product on \textit{any} square closed pointed vector lattice, it is a definition that notoriously makes it difficult to prove the distributive and associative properties (see \cite[Section 34]{LilZan}), as well as other axioms of a $\Phi$-algebra multiplication. For this reason, we only consider Archimedean vector lattices in this section, where more tools are available.
	
	Theorem~\ref{T: weak order unit} below shows that, in the class of positively pointed Archimedean vector lattices, only the ones with a weak order unit can be square closed. In order to verify this claim, we require the following lemma.
	
	\begin{lemma}\label{L: 0^2 exists iff e pos}
		Let $(E,e)$ be a pointed Archimedean vector lattice. If $0\in\E$, then $0^{[2]}=0$.
	\end{lemma}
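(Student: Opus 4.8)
The plan is to reduce the statement to Proposition~\ref{P: 0^2=0 iff e pos}: that proposition already tells us $0^{[2]}=0$ is equivalent to $(E,e)$ being positively pointed, so it suffices to prove the implication $0\in\E\implies e\in E^+$. This is the only place where the Archimedean hypothesis will play a role, and it is the heart of the argument.

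First I would set $u:=0^{[2]}=\underset{\lambda\in\R}{\sup}\{-\lambda^2e\}$, which exists in $E$ by assumption. Taking $\lambda=0$ in the defining family shows $u\geq 0$. Next, for each $n\in\N$, specializing to $\lambda=\sqrt{n}$ gives $-ne=-\lambda^2e\leq u$, which rearranges to $e\geq-\tfrac{1}{n}u$. Hence $e$ is an upper bound for the set $\bigl\{-\tfrac{1}{n}u : n\in\N\bigr\}$; the point of the substitution $\lambda=\sqrt n$ is exactly to manufacture, out of the single element $u$, a family of lower bounds for $e$ that will shrink to $0$.

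Then I would invoke the Archimedean property of $E$: since $u\geq 0$, we have $\tfrac{1}{n}u\downarrow 0$, equivalently $-\tfrac{1}{n}u\uparrow 0$, so $0$ is the least upper bound of $\bigl\{-\tfrac{1}{n}u : n\in\N\bigr\}$. Combined with the previous step this forces $e\geq 0$, i.e. $(E,e)$ is positively pointed. Applying Proposition~\ref{P: 0^2=0 iff e pos} then yields $0^{[2]}=0$, completing the proof.

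I do not anticipate a genuine obstacle here; the argument is short, and the only subtlety is the modest one noted above — choosing the right substitution to convert existence of the supremum $0^{[2]}$ into an Archimedean squeeze on $e$. It is worth remarking that this lemma genuinely sharpens Proposition~\ref{P: 0^2=0 iff e pos} in the Archimedean setting: there the equivalence was between $0^{[2]}=0$ and positive pointedness, whereas here the mere \emph{existence} of $0^{[2]}$ already forces both, and this is what will be used in Theorem~\ref{T: weak order unit}.
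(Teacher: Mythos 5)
Your proposal is correct and follows essentially the same route as the paper: take $u=0^{[2]}\geq 0$, use the bounds $-\lambda^2 e\leq u$ (the paper writes $\lambda^2(-e)\leq u$ directly rather than substituting $\lambda=\sqrt{n}$, which amounts to the same squeeze) together with the Archimedean property to get $e\in E^+$, and then invoke Proposition~\ref{P: 0^2=0 iff e pos}. No gaps.
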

	
	\begin{proof}
		Assume that $0\in\E$. Then $u:=\underset{\lambda\in\R}{\sup}\{2\lambda0-\lambda^2e\}=\underset{\lambda\in\R}{\sup}\{-\lambda^2e\}\in E^+$. Hence, for every $\lambda\in\R$, we have $\lambda^2(-e)\leq u$. Since $E$ is Archimedean, we have $-e\leq 0$, and so $e\in E^+$. In other words, $(E,e)$ is positively pointed. By Proposition~\ref{P: 0^2=0 iff e pos}, we have $0^{[2]}=0$.
	\end{proof}
	
	\begin{theorem}\label{T: weak order unit}
		Let $(E,e)$ be a square closed pointed Archimedean vector lattice. Then $e$ is a weak order unit of $E$.
	\end{theorem}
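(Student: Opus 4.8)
The plan is first to pin down the sign of $e$, and then to play off two different formulas for the order-theoretic square of $f+e$ against each other.

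\textbf{Step 1 (positivity of $e$).} Since $E$ is square closed, $0\in E=\E$, so Lemma~\ref{L: 0^2 exists iff e pos} gives $0^{[2]}=0$, and then Proposition~\ref{P: 0^2=0 iff e pos} yields $e\in E^+$. Hence $(E,e)$ is a square closed \emph{positively} pointed Archimedean vector lattice. In particular Corollary~\ref{C: Span e contained} applies, giving $e^{[2]}=e$, and all the conclusions of Proposition~\ref{P: basic props} (and of Proposition~\ref{P: closure props}) are at our disposal.

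\textbf{Step 2 (weak order unit property).} Let $f\in E$ satisfy $|f|\wedge e=0$; the goal is $f=0$. Put $g:=|f|\in E^+$, so that $g\wedge e=0$. On one hand, since $g\wedge e=0$, Proposition~\ref{P: basic props}(iii) together with $e^{[2]}=e$ gives $(g+e)^{[2]}=g^{[2]}+e^{[2]}=g^{[2]}+e$. On the other hand, Proposition~\ref{P: closure props}(ii) applied with $t=1$ gives $(g+e)^{[2]}=g^{[2]}+2g+e$. Equating the two expressions and cancelling $g^{[2]}+e$ forces $2g=0$, whence $g=|f|=0$ and therefore $f=0$. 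Combined with Step~1, this says precisely that $e$ is a weak order unit of $E$.

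\textbf{Where the difficulty lies.} The argument above is short because the real work has been front-loaded into Propositions~\ref{P: closure props} and~\ref{P: basic props}. The only genuine points requiring attention are (a) verifying that Proposition~\ref{P: basic props} is legitimately available, i.e. that $(E,e)$ is positively pointed — this is exactly what Step~1 secures, using that $E$ being square closed forces $0\in\E$ and the Archimedean hypothesis forces $e\in E^+$ — and (b) recalling $e^{[2]}=e$, so that the ``disjoint-additivity'' identity $(g+e)^{[2]}=g^{[2]}+e^{[2]}$ and the ``translation-by-$e$'' identity $(g+e)^{[2]}=g^{[2]}+2g+e$ can be matched term by term.
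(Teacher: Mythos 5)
Your proof is correct, and its second step takes a genuinely different route from the paper. Step 1 (deriving $e\in E^+$ from $0\in E^{[2]}$ via Lemma~\ref{L: 0^2 exists iff e pos} and Proposition~\ref{P: 0^2=0 iff e pos}) is exactly the paper's opening move. For the disjointness part, however, the paper argues directly from the definition of the order-theoretic square: for $f\in E^+$ with $f\perp e$ and $\lambda>0$ it uses the bound $2\lambda f-\lambda^2 e\le f^{[2]}$, the disjointness identity $\bigl(f-\tfrac{\lambda}{2}e\bigr)^+=f^+$, and then the Archimedean property (letting $\lambda\to\infty$) to force $f^+=0$. You instead play the disjoint-additivity formula $(g+e)^{[2]}=g^{[2]}+e^{[2]}=g^{[2]}+e$ (Proposition~\ref{P: basic props}(iii) together with $e^{[2]}=e$ from Corollary~\ref{C: Span e contained}) against the translation formula $(g+e)^{[2]}=g^{[2]}+2g+e$ (Proposition~\ref{P: closure props}(ii) with $t=1$), and cancel to get $2g=0$. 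All the results you invoke precede the theorem and assume only square closedness and positive pointedness, so there is no circularity; your argument even makes visible that the Archimedean hypothesis is needed only to secure $e\in E^+$, whereas the paper's proof invokes it a second time in the disjointness step. The trade-off: the paper's argument is more self-contained (nothing beyond the definition of $f^{[2]}$ and a standard disjointness identity from \cite{LilZan}), while yours is shorter and more algebraic because the work has been front-loaded into the already-proven structural identities for the order-theoretic square.
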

	
	\begin{proof}
		First note that, since $(E,e)$ is square closed, we have $0\in\E$. By Lemma~\ref{L: 0^2 exists iff e pos}, we have that $0^{[2]}=0$, and so we get $e\in E^+$ from Proposition~\ref{P: 0^2=0 iff e pos}. 
		
		Suppose next that $f\in E^+$ and that $f\perp e$. Let $\lambda\in\R^+\setminus\{0\}$. Then $2\lambda f-\lambda^2e\leq f^{[2]}$,
		and so $f-\dfrac{\lambda}{2}e\leq \dfrac{1}{2\lambda}f^{[2]}$. Hence $\left(f-\dfrac{\lambda}{2}e\right)^+\leq\dfrac{1}{2\lambda}f^{[2]}$. Since $f\perp-\dfrac{\lambda}{2}e$, we have by \cite[Theorem~8.2(2)]{LilZan} that
		\[
		\left(f-\dfrac{\lambda}{2}e\right)^+=f^+ +(-\dfrac{\lambda}{2}e)^+=f^+.
		\]
		Thus we obtain $f^+\leq\dfrac{1}{2\lambda}f^{[2]}$. Since $\lambda>0$ is arbitrary and $E$ is Archimedean, we conclude that $f^+=0$. That $f^-=0$ can be shown is a symmetrical manner. Hence $f=0$, and thus $e$ is a weak order unit.
	\end{proof}
	
	Our next result, combined with the theory of small Riesz spaces \cite{BusvR}, will prove to be an extremely powerful tool for the remainder of the manuscript. Its proof is an approximation argument similar to what is found in the proof of \cite[Proposition~3.4]{AzBoBus}.
	
	\begin{theorem}\label{T: hom intergchange}
		Let $(E,e_1)$ and $(F,e_2)$ be pointed Archimedean vector lattices with $e_1$ and $e_2$ strong order units. Consider a vector lattice homomorphism $T\colon E\to F$ with the property that $T(e_1)=e_2$. For every $f\in\E$ such that $T(f)\in F^{[2]}$, we have
		\[
		T\left(f^{[2]}\right)=\bigl(T(f)\bigr)^{[2]}.
		\]
	\end{theorem}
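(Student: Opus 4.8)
The plan is to run an $\varepsilon$-approximation argument that replaces the infinite supremum defining $f^{[2]}$ by finite suprema, which a vector lattice homomorphism \emph{does} preserve. First I would fix $f\in\E$ with $T(f)\in F^{[2]}$ and use that $e_1$ is a strong order unit (so in particular $e_1\in E^+$) to choose $t>0$ with $|f|\le te_1$; applying the vector lattice homomorphism $T$ with $T(e_1)=e_2$ then gives $|T(f)|=T(|f|)\le te_2$. Proposition~\ref{P: easy sup}(v) now lets me replace the index set $\R$ by the compact interval $[-t,t]$ in the formula for $f^{[2]}$.

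The easy inequality is $\bigl(T(f)\bigr)^{[2]}\le T\bigl(f^{[2]}\bigr)$: for every $\lambda\in\R$ we have $2\lambda f-\lambda^2e_1\le f^{[2]}$, and applying the monotone linear map $T$ (with $T(e_1)=e_2$) yields $2\lambda T(f)-\lambda^2e_2\le T\bigl(f^{[2]}\bigr)$. Hence $T\bigl(f^{[2]}\bigr)$ is an upper bound for $\{2\lambda T(f)-\lambda^2e_2:\lambda\in\R\}$, and so it dominates the least upper bound $\bigl(T(f)\bigr)^{[2]}$, which exists by hypothesis.

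For the reverse inequality I would exploit a Lipschitz-type estimate in the parameter $\lambda$: for $\lambda,\mu\in[-t,t]$, using $|f|\le te_1$ and $|\lambda|,|\mu|\le t$,
\[
\bigl|(2\lambda f-\lambda^2e_1)-(2\mu f-\mu^2e_1)\bigr|\le 2|\lambda-\mu|\,|f|+|\lambda^2-\mu^2|\,e_1\le 4t|\lambda-\mu|\,e_1.
\]
Given $\varepsilon>0$, choose a finite set $\lambda_1,\dots,\lambda_k\in[-t,t]$ such that every $\lambda\in[-t,t]$ lies within $\varepsilon/(4t)$ of some $\lambda_i$ (possible by compactness). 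The estimate then gives $2\lambda f-\lambda^2e_1\le\bigvee_{i=1}^k(2\lambda_i f-\lambda_i^2e_1)+\varepsilon e_1$ for every $\lambda\in[-t,t]$, so that the right-hand side is an upper bound for the set whose supremum is $f^{[2]}$ (by Proposition~\ref{P: easy sup}(v)); thus $f^{[2]}\le\bigvee_{i=1}^k(2\lambda_i f-\lambda_i^2e_1)+\varepsilon e_1$. Applying $T$, which preserves finite suprema and sends $e_1$ to $e_2$, and using $2\lambda_i T(f)-\lambda_i^2e_2\le\bigl(T(f)\bigr)^{[2]}$ for each $i$, we obtain
\[
T\bigl(f^{[2]}\bigr)\le\bigvee_{i=1}^k\bigl(2\lambda_i T(f)-\lambda_i^2e_2\bigr)+\varepsilon e_2\le\bigl(T(f)\bigr)^{[2]}+\varepsilon e_2.
\]
Since $F$ is Archimedean and $\varepsilon>0$ is arbitrary, $T\bigl(f^{[2]}\bigr)\le\bigl(T(f)\bigr)^{[2]}$, which together with the previous paragraph gives the desired equality.

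The main obstacle is exactly that a vector lattice homomorphism need not preserve the infinite supremum occurring in the definition of $f^{[2]}$; the point of the argument is that the uniform continuity of $\lambda\mapsto 2\lambda f-\lambda^2e_1$ on the compact interval $[-t,t]$ — which is available precisely because $e_1$ is a strong order unit, supplying both the majorant $|f|\le te_1$ and the error term $\varepsilon e_1$ — collapses the problem to finite suprema, after which the Archimedean property of $F$ closes the gap.
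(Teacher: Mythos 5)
Your proof is correct and uses essentially the same idea as the paper: both arguments exploit the strong order units to approximate the parametric family $\lambda\mapsto 2\lambda f-\lambda^2e_1$ uniformly (relative to $|f|+te_1$) by finite suprema over a grid of $[-t,t]$, which $T$ preserves, and then invoke the Archimedean property of $F$. The only organizational difference is that you dispose of the inequality $\bigl(T(f)\bigr)^{[2]}\le T\bigl(f^{[2]}\bigr)$ directly by positivity of $T$ and reserve the $\varepsilon$-net for the reverse inequality, whereas the paper bounds the absolute difference symmetrically in one estimate.
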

	
	\begin{proof}
		Let $f\in\E$, assume $T(f)\in F^{[2]}$, and let $n\in\N$ be arbitrary. Since $e_1$ and $e_2$ are strong order units, there exists $M\in\R^+$ such that $|f|\leq Me_1$ and $|T(f)|\leq Me_2$. Fix $\lambda\in[-M,M]$. There exists $k_0\in\{-n,...,0, ...,n\}$ such that
		\[
		\left|\lambda-\dfrac{Mk_0}{n}\right|<\dfrac{M}{n}.
		\]
		Hence we obtain
		\begin{align*}
			2\lambda f-\lambda^2e_1-\bigvee_{k=-n}^n\left(2\dfrac{Mk}{n}f-\left(\dfrac{Mk}{n}\right)^2e_1\right)&\leq 2\lambda f-\lambda^2e_1-\left(2\dfrac{Mk_0}{n}f-\left(\dfrac{Mk_0}{n}\right)^2e_1\right)\\
			&=2\left(\lambda-\dfrac{Mk_0}{n}\right)f-\left(\lambda^2-\left(\dfrac{Mk_0}{n}\right)^2\right)e_1\\
			&\leq 2\left|\lambda-\dfrac{Mk_0}{n}\right||f|+\left|\lambda-\dfrac{Mk_0}{n}\right|\left|\lambda+\dfrac{Mk_0}{n}\right|e_1\\
			&\leq 2\dfrac{M}{n}|f|+\dfrac{M}{n}\cdot2Me_1\\
			&=\dfrac{2M}{n}\bigl(|f|+Me_1\bigr).
		\end{align*}
		Taking the supremum over all $\lambda\in[-M,M]$, and employing Proposition~\ref{P: easy sup}(v), we get
		\begin{align*}
			\left|f^{[2]}-\bigvee_{k=-n}^n\left(2\dfrac{Mk}{n}f-\left(\dfrac{Mk}{n}\right)^2e_1\right)\right|&=f^{[2]}-\bigvee_{k=-n}^n\left(2\dfrac{Mk}{n}f-\left(\dfrac{Mk}{n}\right)^2e_1\right)\\
			&\leq\dfrac{2M}{n}\bigl(|f|+Me_1\bigr).
		\end{align*}
		
		Likewise, we have
		\[
		\left|\bigl(T(f)\bigr)^{[2]}-\bigvee_{k=-n}^n\left(2\dfrac{Mk}{n}T(f)-\left(\dfrac{Mk}{n}\right)^2e_2\right)\right|\leq \dfrac{2M}{n}\bigl(|T(f)|+Me_2\bigr).
		\]
		Therefore, since $T$ is a vector lattice homomorphism, and $T(e_1)=e_2$, we get
		\begin{align*}
			\left|T\left(f^{[2]}\right)-\bigl(T(f)\bigr)^{[2]}\right|&\leq\left|T\left(f^{[2]}\right)-T\left(\bigvee_{k=-n}^n\left(2\dfrac{Mk}{n}f-\left(\dfrac{Mk}{n}\right)^2e_1\right)\right)\right|\\
			&+\left|\bigl(T(f)\bigr)^{[2]}-T\left(\bigvee_{k=-n}^n\left(2\dfrac{Mk}{n}f-\left(\dfrac{Mk}{n}\right)^2e_1\right)\right)\right|\\
			&=\left|T\left(f^{[2]}-\bigvee_{k=-n}^n\left(2\dfrac{Mk}{n}f-\left(\dfrac{Mk}{n}\right)^2e_1\right)\right)\right|\\
			&+\left|\bigl(T(f)\bigr)^{[2]}-\bigvee_{k=-n}^n\left(2\dfrac{Mk}{n}T(f)-\left(\dfrac{Mk}{n}\right)^2e_2\right)\right|\\
			&\leq T\left(\dfrac{2M}{n}\bigl(|f|+Me_1\bigr)\right)+\dfrac{2M}{n}\bigl(|T(f)|+Me_2\bigr)\\
			&=\dfrac{4M}{n}\bigl(|T(f)|+Me_2\bigr).
		\end{align*}
		Since $n\in\N$ was arbitrary and $F$ is Archimedean, we have $T\left(f^{[2]}\right)=\bigl(T(f)\bigr)^{[2]}$.
	\end{proof}
	
	As an immediate corollary, we obtain the following result regarding the multiplication defined in Definition~\ref{D: multn}.
	
	\begin{corollary}\label{C: T mult've}
		Let $(E,e_1)$ and $(F,e_2)$ be square closed pointed Archimedean vector lattices with $e_1$ and $e_2$ strong order units, and let $T\colon E\to F$ be a vector lattice homomorphism such that $T(e_1)=e_2$. Then for every $f,g\in E$, we have
		\[
		T(fg)=T(f)T(g).
		\]
	\end{corollary}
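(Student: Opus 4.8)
The plan is to derive this directly from Theorem~\ref{T: hom intergchange} together with the definition of the multiplication in Definition~\ref{D: multn}. The key observation is that, because $(E,e_1)$ and $(F,e_2)$ are square closed, we have $\E = E$ and $F^{[2]} = F$; in particular, for every $f \in E$ the hypotheses of Theorem~\ref{T: hom intergchange} are automatically met, since $f \in \E$ and $T(f) \in F = F^{[2]}$. Hence that theorem yields
\[
T\left(f^{[2]}\right) = \bigl(T(f)\bigr)^{[2]} \qquad (f \in E).
\]

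Next I would apply this identity to the three elements $f$, $g$, and $f+g$, and then expand using the definition of the product and the linearity of $T$. Explicitly,
\[
T(fg) = \tfrac{1}{2}\left(T\bigl((f+g)^{[2]}\bigr) - T\bigl(f^{[2]}\bigr) - T\bigl(g^{[2]}\bigr)\right)
= \tfrac{1}{2}\left(\bigl(T(f)+T(g)\bigr)^{[2]} - \bigl(T(f)\bigr)^{[2]} - \bigl(T(g)\bigr)^{[2]}\right),
\]
where in the last step I also use that $T$ is additive so that $T(f+g) = T(f)+T(g)$. The right-hand side is precisely $T(f)T(g)$ by Definition~\ref{D: multn} applied in the square closed pointed Archimedean vector lattice $(F,e_2)$.

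There is no real obstacle here: the content is entirely in Theorem~\ref{T: hom intergchange}, and the corollary is just a matter of unwinding the definition. The only point worth stating carefully is why the interchange formula applies without any extra hypothesis on $f$ or $T(f)$ — namely, square closedness of both $(E,e_1)$ and $(F,e_2)$ makes the conditions $f \in \E$ and $T(f) \in F^{[2]}$ vacuous. I would therefore keep the proof to a couple of lines, beginning by noting $\E = E$ and $F^{[2]} = F$, then invoking Theorem~\ref{T: hom intergchange}, and finishing with the displayed computation above.
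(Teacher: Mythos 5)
Your proof is correct and is exactly the argument the paper intends: the paper states this corollary as immediate from Theorem~\ref{T: hom intergchange} together with Definition~\ref{D: multn}, which is precisely your unwinding, with square closedness making the hypotheses $f\in\E$ and $T(f)\in F^{[2]}$ automatic.
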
 
	
	For another corollary to Theorem~\ref{T: hom intergchange}, we get the following result, which we will freely use throughout the remainder of the manuscript.
	
	\begin{corollary}\label{L: subspace invariant}
		Let $E_0$ and $E$ be Archimedean vector lattices with $E_0$ a vector sublattice of $E$. Fix $e\in E_0^+$, and suppose that $(E_0,e)$ and $(E,e)$ are both positively pointed Archimedean vector lattices with respect to $e$. Assume that $(E,e)$ is square closed, and that $e$ is a strong order unit of $E$. If $f\in\E_0$, then the value of $f^{[2]}$ is the same, whether its corresponding supremum is calculated in $E_0$ or $E$.
	\end{corollary}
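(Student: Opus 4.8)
The plan is to obtain this immediately from Theorem~\ref{T: hom intergchange} by applying it to the inclusion map. The first thing I would record is that $e$ is automatically a strong order unit of $E_0$: since $E_0$ carries the order induced from $E$ and $e$ is a strong order unit of $E$, for each $x\in E_0$ there is some $n\in\N$ with $|x|\leq ne$, and this inequality holds in $E_0$ as well. Hence $(E_0,e)$ and $(E,e)$ are both pointed Archimedean vector lattices whose distinguished element is a strong order unit, which is exactly the hypothesis framework of Theorem~\ref{T: hom intergchange}.

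Next I would take $T$ to be the inclusion map $\iota\colon E_0\to E$. Because $E_0$ is a vector sublattice of $E$, the map $\iota$ is a vector lattice homomorphism, and trivially $\iota(e)=e$. Since $(E,e)$ is square closed, we have $\E=E$, so every $f\in E_0\subseteq E$ satisfies $\iota(f)=f\in\E$; that is, the side condition ``$T(f)$ lies in the order-theoretic square domain of the codomain'' from Theorem~\ref{T: hom intergchange} is vacuous here. Consequently, for each $f\in\E_0$, Theorem~\ref{T: hom intergchange} gives $\iota\bigl(f^{[2]}\bigr)=\bigl(\iota(f)\bigr)^{[2]}$. Unwinding this: the left-hand side is the order-theoretic square of $f$ with respect to $e$ computed in $(E_0,e)$ — namely $\underset{\lambda\in\R}{\sup}\{2\lambda f-\lambda^2e\}$ taken in $E_0$ — regarded as an element of $E$, and the right-hand side is the corresponding supremum taken in $E$. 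So the two values agree, which is the assertion.

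I do not expect a genuine obstacle here; the content is essentially bookkeeping. The only two points that require a moment's care are (i) verifying that $e$ survives as a strong order unit after restricting to the sublattice $E_0$, and (ii) noticing that square closedness of $(E,e)$ makes the membership hypothesis $T(f)\in\E$ of Theorem~\ref{T: hom intergchange} automatic, so that the theorem can be invoked for every $f\in\E_0$ without further assumptions on $E_0$.
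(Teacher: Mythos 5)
Your proposal is correct and follows exactly the paper's argument: apply Theorem~\ref{T: hom intergchange} to the inclusion map $E_0\to E$, which fixes $e$. Your two added checks (that $e$ remains a strong order unit of the sublattice $E_0$, and that square closedness of $(E,e)$ makes the codomain membership hypothesis automatic) are exactly the points the paper leaves implicit, so there is no difference in substance.
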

	
	\begin{proof}
		Let $f\in\E_0$, and let $f^{[2]}$ be the order-theoretic square of $f$ with respect to $e$, where the associated supremum is calculated in $E_0$. Applying Theorem~\ref{T: hom intergchange} to the inclusion map $I\colon E_0\to E$, we obtain
		$I\left(f^{[2]}\right)=\bigl(I(f)\bigr)^{[2]}$.
	\end{proof}
	
	We will utilize the following notation and lemma below in the proof of our main result Theorem~\ref{T: main}.
	
	\begin{notation}
		Given a pointed vector lattice $(E,e)$, we denote by $I_e$ the ideal of $E$ generated by $e$.
	\end{notation}
	
	\begin{lemma}\label{L: I_e is sc}
		If $(E,e)$ is a square closed pointed Archimedean vector lattice, then $(I_e, e)$ is a square closed pointed Archimedean vector lattice.
	\end{lemma}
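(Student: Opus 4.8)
The plan is to do all the work inside $E$ and then transfer the conclusion back to $I_e$, exploiting that $(E,e)$ is already square closed. As a preliminary step I would observe that, by Theorem~\ref{T: weak order unit}, $e$ is a weak order unit of $E$, so in particular $e\in E^+$; consequently $I_e$ is a vector sublattice of the Archimedean vector lattice $E$ containing $e$ with $e\ge 0$, so $(I_e,e)$ is a positively pointed Archimedean vector lattice (and in fact $e$ is a strong order unit of $I_e$). It then remains to show that $(I_e,e)$ is square closed.

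The heart of the matter is to prove that for each $f\in I_e$ the order-theoretic square $f^{[2]}$, which exists in $E$ because $(E,e)$ is square closed, actually lies in $I_e$. Fix $n\in\N$ with $|f|\le ne$. By Proposition~\ref{P: easy sup}(v) we may write $f^{[2]}=\sup_{\lambda\in[-n,n]}\{2\lambda f-\lambda^2e\}$, the supremum computed in $E$; and for each $\lambda\in[-n,n]$ we have $2\lambda f-\lambda^2e\le 2\lambda f\le 2|\lambda|\,|f|\le 2n^2e$. Since also $f^{[2]}\ge 2\cdot 0\cdot f-0^2e=0$, it follows that $0\le f^{[2]}\le 2n^2e$, and hence $f^{[2]}\in I_e$.

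Finally I would transfer the supremum from $E$ to $I_e$. Each element $2\lambda f-\lambda^2e$ of the defining set belongs to $I_e$, since $|2\lambda f-\lambda^2e|\le\bigl(2|\lambda|n+\lambda^2\bigr)e$, and by the previous paragraph the supremum of $\{2\lambda f-\lambda^2e:\lambda\in\R\}$ taken in $E$ lies in $I_e$ as well. As $I_e$ is a sublattice of $E$, any upper bound of this set that lies in $I_e$ is also an upper bound in $E$ and therefore dominates that supremum; hence $\sup_{\lambda\in\R}\{2\lambda f-\lambda^2e\}$ exists in $I_e$ and equals $f^{[2]}$. Thus $f\in(I_e)^{[2]}$ for every $f\in I_e$; that is, $(I_e,e)$ is square closed, which completes the proof.

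I do not anticipate a serious obstacle here. The two points that deserve a little care are the uniform bound $f^{[2]}\le 2n^2e$ — where Proposition~\ref{P: easy sup}(v) is exactly what lets us restrict $\lambda$ to the compact range $[-n,n]$ — and the elementary but essential observation that a supremum of a subset of an ideal, once it is known to exist in the ambient space and to land inside the ideal, is automatically the supremum relative to the ideal.
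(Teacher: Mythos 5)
Your proposal is correct and follows essentially the same route as the paper's proof: use Theorem~\ref{T: weak order unit} to get $e\in E^+$, invoke Proposition~\ref{P: easy sup}(v) to restrict the supremum to $\lambda\in[-n,n]$ where $|f|\leq ne$, bound $2\lambda f-\lambda^2e\leq 2n^2e$, note $f^{[2]}\geq 0$, and conclude $f^{[2]}\in I_e$. The only difference is that you spell out the (automatic) transfer of the supremum from $E$ to the ideal $I_e$, which the paper leaves implicit.
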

	
	\begin{proof}
		Let $(E,e)$ be a square closed pointed Archimedean vector lattice. We only need to prove that $(I_e, e)$ is square closed. To this end, let $f\in I_e$. Since $E$ is square closed, the map $f\mapsto f^{[2]}$ is defined as a map from $I_e$ to $E$. Thus we only need to show that $f^{[2]}\in I_e$. To this end, there exists $n\in\mathbb{N}$ such that $|f|\leq ne$, as it follows from Theorem~\ref{T: weak order unit} that $e\in E^+$. Therefore, by Proposition~\ref{P: easy sup}(v), we have that $f^{[2]}=\underset{\lambda\in[-n,n]}{\sup}\{2\lambda f-\lambda^2e\}$ holds in $E$. Furthermore, we have that $\lambda\in[-n,n]$ if and only if $-\lambda\in[-n,n]$. Hence, for any $\lambda\in[-n,n]$, we have
		\[
		2\lambda f-\lambda^2e\leq 2\lambda f =2\lambda f^++2(-\lambda)f^-\leq 2nf^++2nf^-=2n|f|\leq 2n^2e. 
		\]
		Taking the supremum of $\{2\lambda f-\lambda^2e : \lambda\in[-n,n]\}$ and utilizing Proposition~\ref{P: easy sup}(v) again, we get $f^{[2]}\leq 2n^2e$. Since $f^{[2]}\in E^+$, we have $f^{[2]}\in I_e$.
	\end{proof}
	
	The following notation will also be employed in the proof of Theorem~\ref{T: main}.
	
	\begin{notation}
		Given a pointed vector lattice $(E,e)$ and a sequence $\{f_n\}_{n\in\N}$ in $E$, we write $f_n\to f$ and $\lim\limits_{n\to\infty}f_n=f$ to indicate that $\{f_n\}_{n\in\N}$ converges in order to $f\in E$.
	\end{notation}
	
	We are ready to present the main result of the paper.
	
	\begin{theorem}\label{T: main}
		A pointed Archimedean vector lattice $(E,e)$ is square closed if and only if each of the following hold.
		\begin{itemize}
			\item[(i)] $f(g+h)=fg+fh\quad (f,g,h\in E)$.
			\item[(ii)] $(f+g)h=fh+gh\quad (f,g,h\in E)$.
			\item[(iii)] $(fg)h=f(gh)\quad (f,g,h\in E)$.
			\item[(iv)] $(\alpha f)(\beta g)=(\alpha\beta)fg\quad (f,g\in E, \alpha,\beta\in\R)$.
			\item[(v)] If $f,g\in E^+$, then $fg\in E^+$.
			\item[(vi)] If $f,g\in E$ satisfy $f\wedge g=0$, and $c\in E^+$, then $(cf)\wedge g=0$.
			\item[(vii)] If $f,g\in E$ are such that $f\wedge g=0$, and $c\in E^+$, then $(fc)\wedge g=0$.
			\item[(viii)] $fe=f\quad (f\in E)$.
			\item[(ix)] $ef=f\quad (f\in E)$.
		\end{itemize}
		In other words, an Archimedean vector lattice $E$ is a $\Phi$-algebra with multiplicative unit $e$  if and only if $(E,e)$ is square  closed. Furthermore, we have
		\[
		f^2=f^{[2]}\quad (f\in E).
		\]
	\end{theorem}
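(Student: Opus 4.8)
The converse direction of Theorem~\ref{T: main} is already Proposition~\ref{P: phi-alg implies sc} (which also settles $f^2=f^{[2]}$ in that direction), so my plan is to show that a square closed pointed Archimedean vector lattice $(E,e)$, equipped with the multiplication of Definition~\ref{D: multn}, satisfies (i)--(ix); once that is done, $f^2=f^{[2]}$ follows by applying Proposition~\ref{P: phi-alg implies sc} to $E$ itself. By Theorem~\ref{T: weak order unit} I may assume from the outset that $e\in E^+$ and that $e$ is a weak order unit. The argument runs in two stages: first the case in which $e$ is a \emph{strong} order unit, and then a reduction of the general case to that one via the ideal $I_e$.

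\emph{Stage 1: $e$ a strong order unit.} Here I would use the theory of small Riesz spaces \cite{BusvR}: an Archimedean vector lattice with a strong order unit $e$ has a separating family of Riesz homomorphisms $\phi\colon E\to\R$ with $\phi(e)=1$. Fix such a $\phi$. Since $(\R,1)$ is square closed with $\lambda^{[2]}=\lambda^2$, Theorem~\ref{T: hom intergchange} applies to $\phi$ and gives $\phi(f^{[2]})=\phi(f)^2$ for every $f\in E$, whence, by Definition~\ref{D: multn}, $\phi(fg)=\tfrac12(\phi(f+g)^2-\phi(f)^2-\phi(g)^2)=\phi(f)\phi(g)$, so every such $\phi$ is multiplicative as well as linear and lattice-preserving. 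As these $\phi$ separate the points of $E$, each of (i)--(ix) follows by applying an arbitrary $\phi$ to both sides and cancelling: associativity from $\phi((fg)h)=\phi(f)\phi(g)\phi(h)=\phi(f(gh))$, positivity from $\phi(fg)=\phi(f)\phi(g)\ge 0$, the unit laws from $\phi(fe)=\phi(f)\cdot 1$, and (vi)--(vii) from the fact that $\phi(f)\wedge\phi(g)=\phi(f\wedge g)=0$ forces one of $\phi(f),\phi(g)$ to vanish. Thus $(E,e)$ is a $\Phi$-algebra, and Proposition~\ref{P: phi-alg implies sc} yields $f^2=f^{[2]}$.

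\emph{Stage 2: the general case.} By Lemma~\ref{L: I_e is sc}, $(I_e,e)$ is a square closed pointed Archimedean vector lattice in which $e$ is a strong order unit; moreover for $f\in I_e$ the supremum defining $f^{[2]}$ lies in $I_e$ (by the estimate in the proof of Lemma~\ref{L: I_e is sc}), hence is also the least upper bound in $I_e$, so the order-theoretic square of an element of $I_e$ is unambiguous. By Stage 1, $(I_e,e)$ is therefore a $\Phi$-algebra under the multiplication of Definition~\ref{D: multn}, with $f^2=f^{[2]}$. Since $e$ is a weak order unit and $E$ is Archimedean, $I_e$ is order dense in $E$: for $f\in E^+$ one has $f\wedge ne\uparrow f$ with $f\wedge ne\in I_e^+$. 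Given $f,g,h\in E^+$, set $x_n:=f\wedge ne$, $y_n:=g\wedge ne$, $z_n:=h\wedge ne$; by Proposition~\ref{P: ord cts}, $x_n^{[2]}\uparrow f^{[2]}$, $y_n^{[2]}\uparrow g^{[2]}$, $z_n^{[2]}\uparrow h^{[2]}$, $(x_n+y_n)^{[2]}\uparrow (f+g)^{[2]}$, and likewise for the triple sum, so by linearity of order limits $x_ny_n\to fg$ and $x_n(y_n+z_n)\to f(g+h)$ in order. Because $x_ny_n$ is increasing (positivity and distributivity hold in the $\Phi$-algebra $I_e$), in fact $x_ny_n\uparrow fg$, and in particular $fg\in E^+$. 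Passing the $I_e$-identities $x_n(y_n+z_n)=x_ny_n+x_nz_n$ and $(x_ny_n)z_n=x_n(y_nz_n)$ to the order limit --- applying Proposition~\ref{P: ord cts} again to the increasing sequences $x_ny_n\uparrow fg$ etc.\ --- gives distributivity, associativity and positivity for positive elements, while the unit laws (viii)--(ix) are immediate from Proposition~\ref{P: closure props}(ii) and Corollary~\ref{C: Span e contained}, since $fe=\tfrac12((f+e)^{[2]}-f^{[2]}-e^{[2]})=\tfrac12(f^{[2]}+2f+e-f^{[2]}-e)=f$ and $ef=fe$. It then remains to lift (i)--(iv), (vi), (vii) from positive elements to arbitrary ones, writing each $f$ as $f^+-f^-$ and using Propositions~\ref{P: closure props} and \ref{P: basic props} (in particular $(-f)^{[2]}=f^{[2]}$ and $f^{[2]}=(f^+)^{[2]}+(f^-)^{[2]}$); the parallelogram identity $(f+g)^{[2]}+(f-g)^{[2]}=2f^{[2]}+2g^{[2]}$, proved first in $I_e$ and then in $E$ by order approximation, reduces distributivity over a subtraction to the case already handled. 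Finally, $(E,e)$ being an Archimedean $\Phi$-algebra, Proposition~\ref{P: phi-alg implies sc} gives $f^2=f^{[2]}$ on all of $E$.

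\emph{The main obstacle.} The genuine difficulty is the descent in Stage 2 from $I_e$ to all of $E$ for non-positive elements: Proposition~\ref{P: ord cts} provides order-continuity of $f\mapsto f^{[2]}$ only along \emph{increasing} sequences in $E^+$, so the approximation scheme transfers the $\Phi$-algebra identities directly only when $f,g,h\ge 0$, and handling differences requires the separate parallelogram-law step together with some careful sign bookkeeping --- precisely the kind of trouble flagged after Definition~\ref{D: multn} for multiplications defined by polarization. A secondary point is marshalling the small-Riesz-space input in Stage 1 so that Theorem~\ref{T: hom intergchange} can be applied to enough scalar-valued homomorphisms.
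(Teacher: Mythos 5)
Your one direction, your use of Theorem~\ref{T: hom intergchange} to make the normalized real-valued homomorphisms multiplicative on the ideal $I_e$ (this is exactly Corollary~\ref{C: T mult've}), and your plan to extend from $I_e$ to $E$ by order approximation all track the paper's own proof; the only cosmetic difference in Stage~1 is that the paper gets its separating homomorphisms by applying the small-Riesz-space theorem of \cite{BusvR} to finitely generated sublattices of $I_e$, whereas you invoke them on all of $I_e$ (which is the classical Yosida representation, not \cite{BusvR}, but is correct). The genuine gap is the step you yourself flag: passing from positive elements of $E$ to arbitrary ones. Your approximation machinery rests entirely on Proposition~\ref{P: ord cts}, which governs only increasing sequences in $E^+$, yet the one tool you offer for mixed signs --- the parallelogram identity $(f+g)^{[2]}+(f-g)^{[2]}=2f^{[2]}+2g^{[2]}$ on all of $E$, ``proved first in $I_e$ and then in $E$ by order approximation'' --- cannot be obtained from that proposition: the elements you would need to square, such as $f_n\pm g_n$ with $f_n:=f^+\wedge ne-f^-\wedge ne$, are neither positive nor monotone, so no convergence statement for their order-theoretic squares is available in your argument. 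You also do not say how the parallelogram law would ``reduce distributivity over a subtraction to the case already handled''; what it actually yields (via the Jordan--von Neumann argument, using $0^{[2]}=0$ and $(-f)^{[2]}=f^{[2]}$) is additivity of the polarized product directly, but that is moot until the identity is established on all of $E$, which is precisely what is missing. As written, (i)--(iv) are only proved for $f,g,h\in E^+$. (A minor point: (vi) and (vii) never needed lifting, since $f\wedge g=0$ already forces $f,g\in E^+$.)

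The paper closes exactly this hole with a different device, and without any parallelogram or polarization bookkeeping: for arbitrary $x\in E$ it approximates by $x_n:=x^+\wedge ne-x^-\wedge ne\in I_e$ and shows, using the orthogonality of $x^+\wedge ne$ and $x^-\wedge ne$ together with Proposition~\ref{P: basic props}(ii)--(iv), that $x_n^{[2]}=(x^+\wedge ne)^{[2]}+(x^-\wedge ne)^{[2]}\uparrow(x^+)^{[2]}+(x^-)^{[2]}=x^{[2]}$, and similarly for the sums $g_n+h_n$ and $f_n+g_n+h_n$; the identities already proved in $I_e$ then pass to the order limit for arbitrary $f,g,h\in E$. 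If you wish to keep your positive-elements-first outline, you must supply an analogous convergence statement for $(f_n\pm g_n)^{[2]}$ --- for instance by first proving $|f_n\pm g_n|\uparrow|f\pm g|$ and then combining $x^{[2]}=|x|^{[2]}$ (Proposition~\ref{P: basic props}(iv)) with Proposition~\ref{P: ord cts} --- or simply adopt the paper's truncation $x^+\wedge ne-x^-\wedge ne$ and treat arbitrary elements in one pass.
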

	
	\begin{proof}	
		Let $(E,e)$ be a pointed Archimedean vector lattice. We know from Proposition~\ref{P: phi-alg implies sc} that, if $E$ is a $\Phi$-algebra with multiplicative unit $e$, then $(E,e)$ is square closed.
		
		In proving the converse, it is clear from Definition~\ref{D: multn} that	$fg=gf$ holds for all $f,g\in E$. Hence, once we prove (i), (vi), and (viii), we will immediately also have (ii), (vii), and (ix). Thus we only need to prove (i), (iii)-(vi), and (viii).	Moreover, since the proofs of (iii), (iv), and (vi) are similar to the proof of (i), we, for brevity, omit much of their details.
		
		To this end, suppose $(E,e)$ is square closed. Note that, by Lemma~\ref{L: I_e is sc}, $(I_e, e)$ is also a square closed pointed Archimedean vector lattice. By Theorem~\ref{T: weak order unit}, $e$ is a weak order unit of $E$. Thus $(I_e,e)$ and $(E,e)$ are both positively pointed.
		
		(i) First let $f,g,h\in I_e$.
		
		Let $C$ denote the vector sublattice of $I_e$ generated by
		\[
		\{f,g,h,e,f(g+h), fg, fh\}.
		\]
		Let $\omega\colon C\to\mathbb{R}$ be a vector lattice homomorphism for which $\omega(e)=1$. Using Corollary~\ref{C: T mult've} repeatedly, we get
		\begin{align*}
			\omega\left(f(g+h)\right)&=\omega(f)\omega(g+h)\\
			&=\omega(f)\bigl(\omega(g)+\omega(h)\bigr)\\
			&=\omega(f)\omega(g)+\omega(f)\omega(h)\\
			&=\omega(fg)+\omega(fh)\\
			&=\omega\bigl(fg+fh\bigr).
		\end{align*}
		Since the set of all vector lattice homomorphisms $\omega\colon C\to\R$ for which $\omega(e)=1$ separate the points of $C$, \cite[Remark 1.2(ii) and Theorem~2.4(i)]{BusvR}, we have $f(g+h)=fg+fh$.
		
		Next consider, more generally, $f,g,h\in E$. Moreover, for every $x\in E$, and each $n\in\N$, define
		\[
		x_n:=x^+\wedge ne - x^-\wedge ne.
		\]
		Then $\{x_n\}_{n\in\N}$ is a sequence in $I_e$, and from Theorem~\ref{T: weak order unit}, we have that $x^+\wedge ne\uparrow x^+$ and $x^-\wedge ne\uparrow x^-$. Hence we have $x_n\to x$. By Proposition~\ref{P: ord cts}, we get $(x^+\wedge ne)^{[2]}\uparrow(x^+)^{[2]}$ and $(x^-\wedge ne)^{[2]}\uparrow(x^-)^{[2]}$. It thus follows, using \cite[Theorem~8.2(1)]{LilZan} in the third equality below, Proposition~\ref{P: basic props}(iii) in the fifth and sixth identities below, and Proposition~\ref{P: basic props}(iv) in the second and last equalities below, that, for any $n\in\N$,
		\begin{align*}
			x_n^{[2]}&=(x^+\wedge ne-x^-\wedge ne)^{[2]}\\
			&=|x^+\wedge ne-x^-\wedge ne|^{[2]}\\
			&=|x^+\wedge ne+x^-\wedge ne|^{[2]}\\
			&=(x^+\wedge ne+x^-\wedge ne)^{[2]}\\
			&=(x^+\wedge ne)^{[2]}+(x^-\wedge ne)^{[2]}\\
			&\uparrow(x^+)^{[2]}+(x^-)^{[2]}\\
			&=|x|^{[2]}\\
			&=x^{[2]}.
		\end{align*}
		In particular, we have $f_n\to f$, $g_n\to g$, $h_n\to h$, $f_n^{[2]}\uparrow f^{[2]}$, $g_n^{[2]}\uparrow g^{[2]}$, and $h_n^{[2]}\uparrow h^{[2]}$. Likewise, we get $g_n+h_n\to g+h$ and $(g_n+h_n)^{[2]}\uparrow(g+h)^{[2]}$. It is also true that $f_n+g_n+h_n\to f+g+h$ and $(f_n+g_n+h_n)^{[2]}\uparrow(f+g+h)^{[2]}$. Hence we get
		\[
		f_n^{[2]}+(g_n+h_n)^{[2]}\uparrow f^{[2]}+(g+h)^{[2]}.
		\]
		It follows that
		\begin{align*}
			f_n(g_n+h_n)&:=\dfrac{1}{2}\left((f_n+g_n+h_n)^{[2]}-\left(f_n^{[2]}+(g_n+h_n)^{[2]}\right)\right)\\
			&\to\dfrac{1}{2}\left((f+g+h) ^{[2]}-\left(f^{[2]}+(g+h)^{[2]}\right)\right)\\
			&=:f(g+h).
		\end{align*}	
		Similarly, we have $f_ng_n\to fg$ and $f_nh_n\to fh$. Furthermore, it follows from Definition~\ref{D: multn} and Lemma~\ref{L: I_e is sc} that $\{f_n(g_n+h_n)\}_{n\in\N}$, $\{f_ng_n\}_{n\in\N}$, and $\{f_nh_n\}_{n\in\N}$ are sequences in $I_e$. Thus we have already proved that for all $n\in\N$, we have $f_n(g_n+h_n)=f_ng_n+f_nh_n$. We thus obtain
		\begin{align*}
			f(g+h)&=\lim\limits_{n\to\infty}f_n(g_n+h_n)\\
			&=\lim\limits_{n\to\infty}(f_ng_n+f_nh_n)\\
			&=\lim\limits_{n\to\infty}f_ng_n+\lim\limits_{n\to\infty}f_nh_n\\
			&=fg+fh.
		\end{align*}

		(iii) Let $f,g,h\in I_e$, and let $C$ be the vector sublattice of $I_e$ generated by
		\[
		\{f,g,h,e,fg, gh, (fg)h, f(gh)\}.
		\]
		Let $\omega\colon C\to\mathbb{R}$ be a vector lattice homomorphism with $\omega(e)=1$. Using Corollary~\ref{C: T mult've} several times, we get $\omega\bigl((fg)h\bigr)=\omega(fg)\omega(h)=\omega(f)\omega(g)\omega(h)=\omega(f)\omega(gh)=\omega\bigl(f(gh)\bigr)$.
		It follows from \cite[Remark 1.2(ii) and Theorem~2.4(i)]{BusvR} that $(fg)h=f(gh)$. By approximating elements of $E$ with sequences in $I_e$, precisely as done in (i), we obtain the desired result.
		
		(iv) Fix $f,g\in I_e$ and $\alpha,\beta\in\R$. Let $C$ be the vector sublattice of $I_e$ generated by
		\[
		\{f,g,e,fg,(\alpha f)(\beta g)\}.
		\]
		Consider a vector lattice homomorphism $\omega\colon C\to\mathbb{R}$ such that $\omega(e)=1$. Using Corollary~\ref{C: T mult've} repeatedly, we get $\omega\bigl((\alpha f)(\beta g)\bigr)=\omega(\alpha f)\omega(\beta g)=\alpha\beta\omega(f)\omega(g)=\alpha\beta\omega(fg)$. From \cite[Remark 1.2(ii) and Theorem~2.4(i)]{BusvR}, we get $(\alpha f)(\beta g)=\alpha\beta fg$. The result follows from approximating elements of $E$ by sequences in $I_e$, exactly as done in (i).
		
		(v) Let $f,g\in E^+$. By Proposition~\ref{P: basic props}(i), we have $f^{[2]}+g^{[2]}\leq(f+g)^{[2]}$. Then from Definition~\ref{D: multn} we get
		\begin{align*}
			fg:=\dfrac{1}{2}\left((f+g)^{[2]}-\left(f^{[2]}+g^{[2]}\right)\right)\geq 0.
		\end{align*}
		
		(vi) Let $f,g\in I_e^+$ satisfy $f\wedge g=0$, and let $c\in I_e^+$. Suppose $C$ is the vector sublattice of $I_e$ generated by $\{f,g,c,e,cf\}$.
		Let $\omega\colon C\to\mathbb{R}$ be a vector lattice homomorphism for which $\omega(e)=1$. Using Corollary~\ref{C: T mult've}, we get $\omega\bigl((cf)\wedge g\bigr)=\omega(cf)\wedge\omega(g)=\bigl(\omega(c)\omega(f)\bigr)\wedge \omega(g)$. Thus, if $0\leq\omega(c)\leq 1$, then 
		$\omega\bigl((cf)\wedge g\bigr)=\bigl(\omega(c)\omega(f)\bigr)\wedge \omega(g)\leq \omega(f)\wedge\omega(g)=\omega(f\wedge g)=0$, and if $\omega(c)>1$, we have
		$\omega\bigl((cf)\wedge g\bigr)=\bigl(\omega(c)\omega(f)\bigr)\wedge\omega(g)\leq \omega(c)\bigl(\omega(f\wedge g)\bigr)=0$.
		It follows from \cite[Remark 1.2(ii) and Theorem~2.4(i)]{BusvR} that $(cf)\wedge g=0$. By approximating elements of $E$ with sequences in $I_e$, precisely as done in (i), we obtain the desired result.
		
		(viii) Let $f\in E$. Using Proposition~\ref{P: closure props}(ii) and Corollary~\ref{C: Span e contained}, we have
		\begin{align*}
			fe&:=\dfrac{1}{2}\bigl((f+e)^{[2]}-(f^{[2]}+e^{[2]})\bigr)\\
			&=\dfrac{1}{2}(f^{[2]}+2f+e-f^{[2]}-e)\\
			&=f.
		\end{align*}
		
		We have thus shown that $E$ is a $\Phi$-algebra with multiplicative unit $e$.
		
		Finally, for $f\in E$, we have via Definition~\ref{D: multn} and from Proposition~\ref{P: closure props}(i) that
		\begin{align*}
			f^2:=\dfrac{1}{2}\left((2f)^{[2]}-2f^{[2]}\right)=\dfrac{1}{2}\left(4f^{[2]}-2f^{[2]}\right)=f^{[2]}.
		\end{align*}
	\end{proof}
	
	As a consequence of Theorem~\ref{T: main}, we obtain a purely order-theoretic depiction of Archimedean semiprime $f$-algebras in Theorem~\ref{T: semiprime f-alg}. The result involves the following novel concept.
	
	\begin{definition}\label{D: pseudo square closed}
		We say that a vector lattice $E$ is \textit{pseudo square closed} if there exists a square closed pointed Archimedean vector lattice $(F,e)$ and an injective vector lattice homomorphism $\phi\colon E\to F$ such that, for every $f\in E$, we have $\underset{\lambda\in\R}{\sup}\{2\lambda\phi(f)-\lambda^2e\}\in\phi(E)$. Under these circumstances, we also say that $E$ is \textit{pseudo square closed by} $(F,e)$. 
		
	\end{definition}
	
	\begin{theorem}\label{T: semiprime f-alg}
		Let $E$ be an Archimedean vector lattice. Then $E$ is a semiprime $f$-algebra if and only if $E$ is pseudo square closed. In this case, if $E$ is pseudo square closed by $(F,e)$, then the $f$-algebra multiplication of $E$ satisfies
		\[
		\phi(f^2)=\underset{\lambda\in\R}{\sup}\{2\lambda\phi(f)-\lambda^2e\}\in\phi(E)\quad (f\in E)
		\]
		and
		\[
		\phi(fg)=\dfrac{1}{2}\left(\underset{\lambda\in\R}{\sup}\bigl\{2\lambda\phi(f+g)-\lambda^2e\bigr\}-\underset{\alpha,\beta\in\R}{\sup}\bigl\{2\alpha \phi(f)+2\beta \phi(g)-(\alpha^2+\beta^2)e\bigr\}\right)\ (f,g\in E).
		\]
	\end{theorem}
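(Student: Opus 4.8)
The plan is to use Theorem~\ref{T: main} to trade ``square closed'' for ``$\Phi$-algebra'' and thereby reduce the whole statement to polarization identities inside a $\Phi$-algebra. As a preliminary, observe that by Theorem~\ref{T: main} a pointed Archimedean vector lattice $(F,e)$ is square closed exactly when $F$ is an Archimedean $\Phi$-algebra with multiplicative unit $e$, and in that case
\[
\underset{\lambda\in\R}{\sup}\{2\lambda a-\lambda^2e\}=a^{[2]}=a^2\qquad(a\in F),
\]
while for all $a,b\in F$
\[
\underset{\alpha,\beta\in\R}{\sup}\{2\alpha a+2\beta b-(\alpha^2+\beta^2)e\}=\underset{\alpha\in\R}{\sup}\{2\alpha a-\alpha^2e\}+\underset{\beta\in\R}{\sup}\{2\beta b-\beta^2e\}=a^2+b^2;
\]
the middle equality needs no directedness, since the supremum of the set of sums $\{(2\alpha a-\alpha^2e)+(2\beta b-\beta^2e)\}$ is the sum of the two suprema whenever those exist.

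For the ``if'' direction, assume $E$ is pseudo square closed by $(F,e)$ via the injective vector lattice homomorphism $\phi$. Then $F$ is an Archimedean $\Phi$-algebra, and the defining property says precisely that $a^2=\underset{\lambda\in\R}{\sup}\{2\lambda a-\lambda^2e\}\in\phi(E)$ for every $a\in\phi(E)$. Since $\phi(E)$ is a linear subspace of $F$, the polarization identity $ab=\frac{1}{2}\bigl((a+b)^2-a^2-b^2\bigr)$ then forces $ab\in\phi(E)$ for all $a,b\in\phi(E)$, so $\phi(E)$ is a subring and hence an $f$-subalgebra of $F$: the remaining $f$-algebra axioms pass to $\phi(E)$ because its product and lattice operations are the restrictions of those of $F$, and $\phi(E)$ is semiprime because $F$ is. Transporting this structure through the vector lattice isomorphism $\phi\colon E\to\phi(E)$ makes $E$ an Archimedean semiprime $f$-algebra, and with respect to it $\phi(f^2)=\phi(f)^2$ and $\phi(fg)=\phi(f)\phi(g)$. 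Substituting the suprema from the preliminary paragraph into $\phi(f)^2$ and into $\phi(f)\phi(g)=\frac{1}{2}\bigl((\phi(f)+\phi(g))^2-\phi(f)^2-\phi(g)^2\bigr)$ yields the two displayed formulas.

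For the ``only if'' direction, let $E$ be an Archimedean semiprime $f$-algebra. I would take $F:=\mathrm{Orth}(E)$, the space of orthomorphisms of $E$, which is an Archimedean $\Phi$-algebra with multiplicative unit $e:=\mathrm{id}_E$ (classical; cf.\ \cite{Zan2,HuidP}), so $(F,e)$ is square closed by Proposition~\ref{P: phi-alg implies sc}; and let $\phi\colon E\to F$ be the multiplication representation $\phi(f)(g):=fg$. By the classical structure theory of semiprime Archimedean $f$-algebras -- such algebras are commutative, each $\phi(f)$ is an orthomorphism, and $f\mapsto\phi(f)$ is an injective homomorphism of both the ring and the lattice structures -- $\phi$ is an injective vector lattice homomorphism with $\phi(f)^2=\phi(f^2)\in\phi(E)$ for every $f\in E$. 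Since $\phi(f)^{[2]}=\phi(f)^2$ in the $\Phi$-algebra $F$, this is exactly the statement that $E$ is pseudo square closed by $(F,e)$, and the two formulas follow for this $\phi$ by the computation of the ``if'' direction, reproducing the original multiplication of $E$ because $\phi$ is injective and multiplicative.

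The main obstacle is the ``only if'' direction: one must embed an \emph{arbitrary} Archimedean semiprime $f$-algebra -- possibly carrying no order unit at all -- into an Archimedean $\Phi$-algebra by a vector lattice homomorphism, and this is precisely the classical orthomorphism representation, so the substantive work is already in the literature. Everything else is bookkeeping with suprema, using Theorem~\ref{T: main} to pass freely between $\underset{\lambda\in\R}{\sup}\{2\lambda a-\lambda^2e\}$ and the algebraic square $a^2$.
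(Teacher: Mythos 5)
Your proposal is correct and follows essentially the same route as the paper: the forward direction uses the orthomorphism representation $\phi\colon E\to\mathrm{Orth}(E)$ together with Theorem~\ref{T: main}, and the reverse direction uses Theorem~\ref{T: main} to make $(F,e)$ a $\Phi$-algebra, closes $\phi(E)$ under the product via the polarization identity (which is exactly Definition~\ref{D: multn}), and transports the structure back through the injective homomorphism. Your explicit justification that the double supremum equals the sum of the two single suprema is a small point the paper leaves implicit, but otherwise the arguments coincide.
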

	
	\begin{proof}
		Assume first that $E$ is a semiprime $f$-algebra. Then there exists an injective multiplicative vector lattice homomorphism $\phi\colon E\to\mathrm{Orth}(E)$ \cite[Proposition~12.1]{dP}. Since $E$ is Archimedean, $\mathrm{Orth}(E)$ is an Archimedean $\Phi$-algebra with the identity map $I\colon E\to E$ as multiplicative unit \cite[Theorem~9.4]{dP}. By Theorem~\ref{T: main}, $(\mathrm{Orth}(E), I)$ is square closed and $\pi^2=\pi^{[2]}$ holds for every $\pi\in\mathrm{Orth}(E)$. Therefore, since $E$ is an $f$-algebra and $\phi$ is multiplicative, for any $f\in E$ we have
		\[
		\underset{\lambda\in\R}{\sup}\{2\lambda\phi(f)-\lambda^2I\}=\bigl(\phi(f)\bigr)^{[2]}=\bigl(\phi(f)\bigr)^2=\phi(f^2)\in\phi(E).
		\]
		Hence $E$ is pseudo square closed.
		
		Next suppose $E$ is pseudo square closed. Then there exists a square closed pointed Archimedean vector lattice $(F,e)$ and an injective vector lattice homomorphism $\phi\colon E\to F$, such that, for every $f\in E$,
		\[
		\underset{\lambda\in\R}{\sup}\{2\lambda\phi(f)-\lambda^2e\}\in\phi(E).
		\]
		By Theorem~\ref{T: main}, $(F,e)$ is an Archimedean $\Phi$-algebra under the multiplication defined in Definition~\ref{D: multn} for which $y^2=y^{[2]}\ (y\in F)$. Then we can define a map
		\[
		\phi(E)\times\phi(E)\ni\bigl(\phi(f),\phi(g)\bigr)\mapsto \phi(f)\phi(g)\in F.
		\]
		It also follows from Theorem~\ref{T: main} that the multiplication on $F$ satisfies
		\begin{align*}
			\phi(f)\phi(g)&=\dfrac{1}{2}\left(\bigl(\phi(f)+\phi(g)\bigr)^{[2]}-\left(\bigl(\phi(f)\bigr)^{[2]}+\bigl(\phi(g)\bigr)^{[2]}\right)\right)\\
			&=\dfrac{1}{2}\left(\underset{\lambda\in\R}{\sup}\{2\lambda\phi(f+g)-\lambda^2e\}-\underset{\lambda\in\R}{\sup}\{2\lambda\phi(f)-\lambda^2e\}-\underset{\lambda\in\R}{\sup}\{2\lambda\phi(g)-\lambda^2e\}\right)
		\end{align*}
		for every $f,g\in E$. However, since 
		\[
		\underset{\lambda\in\R}{\sup}\{2\lambda\phi(f+g)-\lambda^2e\},\ \underset{\lambda\in\R}{\sup}\{2\lambda\phi(f)-\lambda^2e\},\ \underset{\lambda\in\R}{\sup}\{2\lambda\phi(g)-\lambda^2e\}\in\phi(E)
		\]
		by assumption, we get that this map defines a multiplication on $\phi(E)$, making $\phi(E)$ an $f$-subalgebra of $F$. Consequently, $\phi(E)$ is semiprime.
		
		Next define $\psi\colon E\to\phi(E)$ by $\psi(f)=\phi(f)\quad (f\in E)$. Then, since $\phi$ in injective, $\psi$ is bijective. Clearly, the map
		\[
		E\times E\ni(f,g)\mapsto \psi^{-1}(fg)\in E
		\]
		defines a multiplication on $E$, $E$ is a semiprime $f$-algebra under this multiplication, and this multiplication on $E$ satisfies
		\[
		\phi(f^2)=\underset{\lambda\in\R}{\sup}\{2\lambda\phi(f)-\lambda^2e\}\in\phi(E)\quad (f\in E)
		\]
		and
		\[
		\phi(fg)=\dfrac{1}{2}\left(\underset{\lambda\in\R}{\sup}\bigl\{2\lambda\phi(f+g)-\lambda^2e\bigr\}-\underset{\alpha,\beta\in\R}{\sup}\bigl\{2\alpha \phi(f)+2\beta \phi(g)-(\alpha^2+\beta^2)e\bigr\}\right)\ (f,g\in E).
		\]
	\end{proof}

	\section{Square-Closedness as an Algebra Testing Tool}
	
	Perhaps the primary advantage that the theory of square closed (pseudo square closed) pointed vector lattices provides is a convenient tool for determining whether or not an Archimedean vector lattice is a $\Phi$-algebra (semiprime $f$-algebra). Indeed, instead of checking if an Archimedean vector lattice satisfies the several $\Phi$-algebra axioms, one can instead just check if it possesses the sole square closedness property with respect to a certain weak order unit. In this regard, we provide Proposition~\ref{P: functionally complete} as an illustration. Since this result involves the continuous Archimedean vector lattice functional calculus \cite{BusdPvR}, we remind the reader of how it is defined.
	
	\begin{definition}\cite[Definition~3.1]{BusdPvR}\label{D: fun cal}
		Fix $n\in\N$. Consider an Archimedean vector lattice $E$, and let $f_1,\dots,f_n\in E$. Suppose $h\colon\R^n\to\R$ is a continuous positively homogeneous function. If there exists $g\in E$ such that, for every real-valued vector lattice homomorphism  $\omega$ defined on the vector sublattice of $E$ generated by $\{f_1,\dots,f_n,g\}$, we have
		\[
		h\bigl(\omega(f_1),\dots,\omega(f_n)\bigr)=\omega(g),
		\]
		then we define $h(f_1,\dots,f_n):=g$. 
	\end{definition}
	
	We add that in Definition~\ref{D: fun cal}, if such a $g\in E$ exists, then it is unique \cite[Lemma~3.2]{BusdPvR}. Of special interest to us are Archimedean vector lattices where this functional calculus can always be defined. This concept leads to our next definition.
	
	\begin{definition}\cite[Definition~3.2]{BusSch}
		For $n\in\N$, an Archimedean vector lattice $E$, and a continuous positively homogeneous function $h\colon\R^n\to\R$, we say that $E$ is $h$-\textit{complete} if for every $f_1,...,f_n\in E$, there exists $g\in E$ such that $h(f_1,...,f_n)=g$. 	
	\end{definition}
	
	Additionally, we call an Archimedean vector lattice $E$ \textit{functionally complete} if, for all $n\in\N$ and every continuous positively homogeneous function $h\colon\R^n\to\R$, $E$ is $h$-complete. We note that an Archimedean vector lattice is functionally complete if and only if it is finitely uniformly complete, see \cite{LausTroit} for more details.
	
	\begin{proposition}\label{P: functionally complete}
		Let $E$ be an Archimedean vector lattice with strong order unit $e$. For each $n\in\N$, define $h_{n}\colon\R^2\to\R$ by
		\[
		h_{n}(x,y):=\begin{cases}
			\min\left\{\dfrac{x^2}{|y|}, n^2|y|\right\} & \text{if}\ y\neq 0\\
			0 & \text{if}\ y=0
		\end{cases}.
		\]
		The following are true.
		\begin{itemize}
			\item[(i)] For every $n\in\N$, $h_n$ is continuous and positively homogeneous.
			\item[(ii)] If for each $n\in\N$, $E$ is $h_n$-complete, then $E$ is a $\Phi$-algebra with $e$ as multiplicative unit.
		\end{itemize}
		Particularly, if $E$ is functionally complete, then $E$ is a $\Phi$-algebra with multiplicative unit $e$. 
	\end{proposition}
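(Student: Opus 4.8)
The plan is to deduce the proposition from Theorem~\ref{T: main}, which asserts that an Archimedean vector lattice $E$ is a $\Phi$-algebra with multiplicative unit $e$ precisely when $(E,e)$ is square closed. So for (ii) I would verify that the hypothesis forces $(E,e)$ to be square closed, and the conclusion then drops out (together with $f^2=f^{[2]}$). Part (i) is routine: $h_n$ is the pointwise minimum of two functions that are continuous away from $y=0$, and the estimate $0\leq h_n(x,y)\leq n^2|y|$ forces $h_n(x_k,y_k)\to 0=h_n(x,0)$ whenever $y_k\to 0$, which gives continuity; positive homogeneity is a one-line check in each of the cases $y\neq 0$ and $y=0$. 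Granting (i) and (ii), the final sentence is immediate, since a functionally complete Archimedean vector lattice is in particular $h_n$-complete for every $n\in\N$.

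For (ii), fix $f\in E$. Since $e$ is a strong order unit, I can choose $n\geq 1$ with $|f|\leq ne$, and by $h_n$-completeness $g:=h_n(f,e)$ exists in $E$; from $0\leq h_n\leq n^2|y|$ and the standard monotonicity properties of the functional calculus \cite{BusdPvR} one gets $0\leq g\leq n^2 e$. I claim $g=\underset{\lambda\in\R}{\sup}\{2\lambda f-\lambda^2 e\}$, which by Definition~\ref{D: square closed} places $f$ in $\E$. The engine of the argument is that whenever $D\subseteq E$ is a finitely generated vector sublattice on which $e$ is a strong order unit, the real-valued vector lattice homomorphisms $\omega\colon D\to\R$ with $\omega(e)=1$ separate the points of $D$ \cite[Remark~1.2(ii) and Theorem~2.4(i)]{BusvR}; for any such $\omega$ with $f,g\in D$ we have $|\omega(f)|=\omega(|f|)\leq n\omega(e)=n$, hence $\omega(g)=h_n(\omega(f),1)=\omega(f)^2$ by Definition~\ref{D: fun cal}, while $\underset{\lambda\in\R}{\sup}\{2\lambda\omega(f)-\lambda^2\}=\omega(f)^2$ by \eqref{eq: x^2}. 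Taking $D$ to be the sublattice generated by $\{f,e,g\}$ --- each generator is dominated by $n^2 e$, so $e$ is a strong order unit of $D$ --- this yields $\omega(2\lambda f-\lambda^2 e)=2\lambda\omega(f)-\lambda^2\leq\omega(f)^2=\omega(g)$ for every separating $\omega$ and every $\lambda$, whence $2\lambda f-\lambda^2 e\leq g$, so that $g$ is an upper bound of $\{2\lambda f-\lambda^2 e:\lambda\in\R\}$. If now $u$ is an arbitrary upper bound, I put $w:=u\wedge g$, so $0\leq w\leq g\leq n^2 e$ and $w$ is still an upper bound; applying the engine to the sublattice generated by $\{f,e,g,w\}$ gives $\omega(g)=\omega(f)^2=\underset{\lambda\in\R}{\sup}\{2\lambda\omega(f)-\lambda^2\}\leq\omega(w)$ for every separating $\omega$, hence $g\leq w\leq u$. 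This proves the claim, so $f^{[2]}=g$ exists; as $f$ was arbitrary, $(E,e)$ is square closed, and Theorem~\ref{T: main} finishes the proof.

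The delicate point is the least-upper-bound half of the claim. An a priori arbitrary upper bound $u$ of $\{2\lambda f-\lambda^2 e:\lambda\in\R\}$ need not be dominated by any multiple of $e$, so it need not lie in a finitely generated sublattice with $e$ as a strong order unit, and the point-separating homomorphisms of \cite{BusvR} are not directly available for it. The remedy is to truncate $u$ to $w=u\wedge g$: this is still an upper bound (being the meet of two upper bounds) but is now squeezed between $0$ and $n^2 e$, so the entire comparison can be carried out inside a single finitely generated Archimedean vector lattice on which $e$ is a strong order unit. A subsidiary routine step, needed for the same reason, is the a priori estimate $0\leq g\leq n^2 e$ that lets $g$ itself sit inside such a sublattice.
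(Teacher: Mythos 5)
Your proof is correct, and it takes a genuinely different route from the paper. The paper's proof of (ii) passes to the uniform completion $E^u$: it invokes the fact that $E^u$ is a $\Phi$-algebra with unit $e$, hence square closed by Proposition~\ref{P: phi-alg implies sc}, and then uses Theorem~\ref{T: hom intergchange} (together with \cite[Theorem~3.11]{BusSch} and the separation theorem of \cite{BusvR}) to identify $f^{[2]}$, computed in $E^u$, with $h_n(f,e)\in E$; since a supremum formed in $E^u$ that lands in the sublattice $E$ is also the supremum in $E$, square closedness of $(E,e)$ follows. You instead never leave $E$: you verify directly that $g=h_n(f,e)$ is $\underset{\lambda\in\R}{\sup}\{2\lambda f-\lambda^2e\}$ in $E$, using the small-Riesz-space separation engine on finitely generated sublattices with $e$ a strong unit, and your truncation $w=u\wedge g$ is exactly the right device to pull an arbitrary upper bound $u$ into such a sublattice. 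What your approach buys is self-containedness --- no uniform completion, no appeal to \cite{VentvE}, no interchange theorem --- at the cost of a longer hands-on supremum verification; the paper's approach is shorter because it reuses its general machinery. Two small points you should make explicit: (a) separation of points by lattice homomorphisms yields the order comparisons you use, via $x\leq y\iff x\vee y=y$ and $\omega(x\vee y)=\omega(x)\vee\omega(y)$; (b) the a priori bound $0\leq g\leq n^2e$, which you need before $e$ can be declared a strong unit of the sublattice generated by $\{f,e,g\}$, should be justified without normalizing at $e$ --- e.g.\ by applying the separation theorem to that finitely generated Archimedean sublattice with arbitrary (not necessarily normalized) real-valued homomorphisms, since $0\leq h_n(x,y)\leq n^2|y|$ pointwise gives $0\leq\omega(g)\leq n^2\omega(e)$ for all such $\omega$; as written, "standard monotonicity of the functional calculus" is the right idea but deserves this one-line argument or a precise citation. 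Part (i) and the final deduction via Theorem~\ref{T: main} match the paper.
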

	
	\begin{proof}
		(i) It is evident that, for each $n\in\N$, $h_n$ is positively homogeneous and continuous at every point $(x,y)\in\R^2$ for which $y\neq 0$. Furthermore, the continuity of each $h_n$ at every point of the form $(x,0)\in\R^2$ follows directly from the squeeze theorem.
		
		(ii) Assume that $E$ is $h_n$-complete for every $n\in\N$. Let $f\in E$. Since $e$ is a strong order unit of $E$, there exists $n\in\N$ such that $|f|\leq ne$. By assumption, $h_n(f,e)$ is defined in $E$ via functional calculus.
		
		Consider next the uniform completion $E^u$ of $E$. Note that $E^u$ is an Archimedean $\Phi$-algebra with multiplicative unit $e$ (see e.g. \cite[Section~2]{VentvE}), and by Proposition~\ref{P: phi-alg implies sc}, $(E^u,e)$ is square closed. Let $C$ be the vector sublattice of $E^u$ generated by
		\[
		\left\{f,e, f^{[2]},h_n(f,e)\right\},
		\]
		and let $\omega\colon C\to\R$ be a vector lattice homomorphism for which $\omega(e)=1$. Since $|f|\leq ne$, we have $\omega(|f|)\leq n$ and thus $\bigl(\omega(f)\bigr)^2\leq n^2$. Therefore, using \cite[Theorem~3.11]{BusSch} in the first equality below, and Theorem~\ref{T: hom intergchange} in the last identity below, we have
		\[
		\omega\bigl(h_n(f,e)\bigr)=h_n\bigl(\omega(f), 1\bigr)=\min\Bigl\{\bigl(\omega(f)\bigr)^2, n^2\Bigr\}=\bigl(\omega(f)\bigr)^2=\bigl(\omega(f)\bigr)^{[2]}=\omega\left(f^{[2]}\right).
		\]
		Since the set of all vector lattice homomorphisms $\omega\colon C\to\R$ for which $\omega(e)=1$ separate the points of $C$, \cite[Remark 1.2(ii) and Theorem~2.4]{BusvR}, we have that
		\[
		f^{[2]}=h_n(f,e)
		\]
		holds in $E^u$. However, since $h_n(f,e)\in E$ by assumption, we have that $f^{[2]}\in E$. Hence $(E,e)$ is square closed. By Theorem~\ref{T: main}, $E$ is a $\Phi$-algebra with multiplicative unit $e$.
	\end{proof}
	
	As a concluding note, we add that every uniformly complete Archimedean vector lattice is functionally complete \cite[Theorem~3.7]{BusdPvR}. However, there exist functionally complete Archimedean vector lattices which are not uniformly complete. One example is the space of step functions on $[0,1]$. Therefore, Proposition~\ref{P: functionally complete} generalizes the well-known result that every uniformly complete Archimedean vector lattice with a strong order unit is a $\Phi$-algebra.

\end{document}